\newcommand{\myauthor}{Benjamin Antieau}
\newcommand{\mytitle}{A reconstruction theorem for abelian categories of twisted sheaves}
\newcommand{\pdftitle}{\mytitle}
\author{\myauthor}
\title{\mytitle}
\definecolor{todo}{rgb}{1,0,0}
\definecolor{conditional}{rgb}{0,1,0}
\definecolor{e-mail}{rgb}{0,.40,.80}
\definecolor{reference}{rgb}{.20,.60,.22}
\definecolor{mrnumber}{rgb}{.80,.40,0}
\definecolor{citation}{rgb}{0,.40,.80}
\DeclareMathOperator{\Ho}{Ho}
\newcommand{\sperf}{\mathrm{sperf}}
\DeclareMathOperator{\op}{op}
\DeclareMathOperator{\im}{im}
\DeclareMathOperator{\St}{St}
\DeclareMathOperator*{\colim}{colim}
\newcommand{\rwe}{\tilde{\rightarrow}}
\newcommand{\riso}{\overset\simeq\rightarrow}
\newcommand{\we}{\simeq}
\newcommand{\tw}{\mathrm{tw}}
\newcommand{\iso}{\cong}
\newcommand{\Gm}{\mathds{G}_{m}}
\DeclareMathOperator{\add}{add}
\newcommand{\supp}{\mathrm{supp}}
\newcommand{\caldararu}{C\u{a}ld\u{a}raru}
\newcommand{\ex}{\mathrm{ex}}
\newcommand{\tors}{\mathrm{tors}}
\newcommand{\qc}{\mathrm{qc}}
\newcommand{\Mod}{\mathrm{Mod}}
\newcommand{\Ch}{\mathrm{Ch}}
\newcommand{\QC}{\mathrm{QC}}
\newcommand{\Coh}{\mathrm{Coh}}
\newcommand{\QCoh}{\mathrm{QCoh}}
\newcommand{\Aff}{\mathrm{Aff}}
\newcommand{\Cat}{\mathrm{Cat}}
\DeclareMathOperator{\Spec}{Spec}
\DeclareMathOperator{\Sp}{Sp}
\DeclareMathOperator{\ann}{ann}
\DeclareMathOperator{\Hoh}{H}
\newcommand{\Hom}{\mathrm{Hom}}
\newcommand{\Fun}{\mathrm{Fun}}
\newcommand{\StQCoh}{\mathscr{QC}\mathrm{oh}}
\newcommand{\StCoh}{\mathscr{C}{\mathrm{oh}}}
\newcommand{\StCh}{\mathscr{C}{\mathrm{h}}}
\newcommand{\et}{\mathrm{\acute{e}t}}
\newcommand{\Zar}{\mathrm{Zar}}
\DeclareMathOperator{\Br}{Br}
\DeclareMathOperator{\dBr}{dBr}
\newcommand{\Nrm}{\mathrm{N}}
\newcommand{\Drm}{\mathrm{D}}
\newcommand{\Crm}{\mathrm{C}}
\newcommand{\Oscr}{\mathscr{O}}
\newcommand{\Ascr}{\mathscr{A}}
\newcommand{\Dscr}{\mathscr{D}}
\newcommand{\Xscr}{\mathscr{X}}
\newcommand{\Yscr}{\mathscr{Y}}
\newcommand{\ZZ}{\mathds{Z}}
\theoremstyle{plain}
\newtheorem{theorem}{Theorem}[section]
\newtheorem{lemma}[theorem]{Lemma}
\newtheorem{proposition}[theorem]{Proposition}
\newtheorem{corollary}[theorem]{Corollary}
\theoremstyle{definition}
\newtheorem{definition}[theorem]{Definition}
\newtheorem{remark}[theorem]{Remark}
\let\oldmarginpar\marginpar
\renewcommand\marginpar[1]{\-\oldmarginpar[\raggedleft\footnotesize #1]%
{\raggedright\footnotesize #1}}
\begin{document}
\maketitle

\begin{abstract}

    \noindent
    We use an idea of Rosenberg to prove a reconstruction theorem for abelian categories of
    $\alpha$-twisted quasi-coherent sheaves on quasi-compact and quasi-separated schemes $X$ when
    $\alpha\in\Br(X)$. By applying the work of To\"en on derived Azumaya algebras, we give a
    proof of \caldararu's conjecture.

    \paragraph{Key Words}
    Brauer group, Azumaya algebras, abelian categories, Morita equivalence.

    \paragraph{Mathematics Subject Classification 2010}
    Primary:
    \href{http://www.ams.org/mathscinet/msc/msc2010.html?t=14Fxx&btn=Current}{14F22},
    \href{http://www.ams.org/mathscinet/msc/msc2010.html?t=16Hxx&btn=Current}{16H05}.\\
    Secondary:
    \href{http://www.ams.org/mathscinet/msc/msc2010.html?t=18Exx&btn=Current}{18E10},
    \href{http://www.ams.org/mathscinet/msc/msc2010.html?t=14Fxx&btn=Current}{14F05}.

\end{abstract}

\section{Introduction}

Twisted sheaves arise in the study of moduli of sheaves
where the universal sheaf exists in general only as a twisted sheaf.
Given a $\Gm$-gerbe $\Xscr\rightarrow X$, an $\Xscr$-twisted quasi-coherent sheaf
on $X$ is a quasi-coherent sheaf on $\Xscr$ such that the inertial action of $\Gm$ agrees
with the action induced by $\Oscr_\Xscr$ (see~\cite{lieblich-moduli} or Section~\ref{sec:abtwist}).
Recall that $\Gm$-gerbes are classified by the \'etale cohomology group
$\Hoh^2_{\et}(X,\Gm)$, in which there are two special subgroups: the Brauer group $\Br(X)$
and the cohomological Brauer group $\Br'(X)=\Hoh^2_{\et}(X,\Gm)_{\tors}$. The Brauer group
consists of the $\Gm$-gerbes $\Xscr\rightarrow X$ for which there is a non-trivial finite
rank $\Xscr$-twisted vector bundle. On quasi-compact schemes, $\Br(X)\subseteq\Br'(X)$. In
most cases of interest, for instance when $X$ possesses an ample line bundle,
$\Br(X)=\Br'(X)$. In the sequel, we will speak of $\alpha$-twisted quasi-coherent sheaves in
reference to $\Xscr$-twisted quasi-coherent sheaves, where $\alpha$ is the class of $\Xscr$
in $\Hoh^2_{\et}(X,\Gm)$.

We prove the following result, a generalization of \caldararu's
conjecture~\cite{caldararu-elliptic}*{Conjecture 4.1}.

\begin{theorem}[\caldararu's conjecture]
    Let $X$ and $Y$ be quasi-compact and quasi-separated schemes over a commutative ring $R$
    (for instance over $\ZZ$), and fix $\alpha\in\Br(X)$ and $\beta\in\Br(Y)$. Suppose that
    there is an equivalence $\QCoh(X,\alpha)\rwe\QCoh(Y,\beta)$ of $R$-linear abelian
    categories of quasi-coherent twisted sheaves. Then, there exists an isomorphism
    $f:X\rightarrow Y$ of $R$-schemes such that $f^*(\beta)=\alpha$.
\end{theorem}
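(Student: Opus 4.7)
My plan is to combine a twisted analogue of Rosenberg's abelian reconstruction theorem with Toën's classification of derived Azumaya algebras. The proof will proceed in three steps, matching the two inputs advertised in the introduction.

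First I would prove the reconstruction theorem: for any qcqs scheme $X$ and any $\alpha\in\Br(X)$, the $R$-scheme $X$ is canonically recovered from the $R$-linear abelian category $\QCoh(X,\alpha)$. Following Rosenberg, one builds a topological space together with a structure sheaf out of a spectral invariant of the category (for instance, the poset of topologizing/local subcategories, or a suitable variant built from indecomposable injectives). The crucial use of the hypothesis $\alpha\in\Br(X)$ (rather than merely $\alpha\in\Br'(X)$) is that $\QCoh(X,\alpha)$ is equivalent to the category of quasi-coherent modules over an Azumaya $\Oscr_X$-algebra $A$. Because $A$ is étale-locally Morita equivalent to $\Oscr_X$ and because Rosenberg's spectral invariants are Morita-invariant, the spectrum of $\QCoh(X,\alpha)$ is canonically identified with that of $\QCoh(X)$, and the intrinsic structure sheaf recovered from the abelian category is canonically isomorphic to $\Oscr_X$. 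Applying this to both sides of the hypothesised equivalence $\QCoh(X,\alpha)\simeq\QCoh(Y,\beta)$ yields an isomorphism $f:X\riso Y$ of $R$-schemes.

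Second, by the functoriality of the reconstruction, the given $R$-linear equivalence is in fact $\Oscr_X$-linear after pulling back along $f$: we obtain an $\Oscr_X$-linear equivalence $\Phi:\QCoh(X,\alpha)\riso\QCoh(X,f^*\beta)$ of Grothendieck categories of modules over Azumaya $\Oscr_X$-algebras. Third, I pass to derived categories: $\Phi$ induces an $\Oscr_X$-linear equivalence $\Drm_{\QCoh}(X,\alpha)\riso\Drm_{\QCoh}(X,f^*\beta)$ of (stable, presentable) $\infty$-categories. By Toën's theorem on derived Azumaya algebras, such $\Oscr_X$-linear equivalences of twisted derived categories are classified by the derived Brauer group $\dBr(X)$, and their existence for two classes in $\Br(X)\subseteq\dBr(X)$ forces those classes to coincide. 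Hence $\alpha=f^*\beta$.

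The main obstacle is the first step. One must verify, in the presence of a nontrivial Azumaya algebra, that Rosenberg's construction produces not some Morita-twisted avatar of $X$ but $X$ itself — equivalently, that the intrinsic structure sheaf built from $\QCoh(X,\alpha)$ is canonically $\Oscr_X$, together with the right $R$-algebra structure. A secondary subtlety is to check that the reconstructed isomorphism $f$ is indeed functorial enough that $\Phi$ becomes $\Oscr_X$-linear, so that Toën's classification — which is formulated for $\Oscr_X$-linear, not merely $R$-linear, equivalences — can be applied in the final step.
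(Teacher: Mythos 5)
Your three-step skeleton coincides with the paper's: a twisted Rosenberg-style reconstruction of $X$ from $\QCoh(X,\alpha)$; transport of the equivalence to a single scheme; and Toën's classification of derived Azumaya algebras. The first step is essentially as in the paper, where one shows $\Sp\QCoh(X,\alpha)$ (localizing subcategories with quasi-local quotient) is homeomorphic to $X$ and carries a structure sheaf recovered from centers of localizations. Your appeal to Morita invariance plus \'etale-local triviality of the Azumaya algebra is a reasonable heuristic, but the paper argues directly with a global Azumaya algebra $\Ascr$, and it is precisely this global choice that makes the construction of $\phi$ possible; this is why the paper separates $\alpha\in\Br(X)$ from $\alpha\in\Br'(X)$.

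The genuine gap is in your third step. You propose to produce an ``$\Oscr_X$-linear equivalence $\Drm_{\QCoh}(X,\alpha)\riso\Drm_{\QCoh}(X,f^*\beta)$'' and conclude by Toën. But Toën's Corollary~3.12 is a statement about \'etale \emph{stacks} of locally presentable dg categories over $X$: it says the stack $L_\alpha$ determines $\alpha$. A single equivalence of global derived categories, even one linear over the commutative ring $\Gamma(X,\Oscr_X)$, is far weaker and does not fall within the scope of that theorem. Indeed the paper's concluding remarks explicitly caution that whether an $R$-linear equivalence $\Drm_{\qc}(X,\alpha)\we\Drm_{\qc}(X,\beta)$ forces $\alpha=\beta$ is a much harder, essentially unresolved, question, with Caldararu's elliptic threefold examples showing the na\"ive expectation fails. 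What your outline calls a ``secondary subtlety'' is in fact the main technical content of the argument.

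What the paper actually uses is that the reconstruction theorem delivers more than an isomorphism $f:X\riso Y$: it yields an equivalence of \emph{Zariski stacks of $\Oscr_X$-linear abelian categories} $\StQCoh(\alpha)\we\StQCoh(f^*\beta)$, obtained by identifying the Serre quotients $\QCoh(X,\alpha)/\bigcap_{B\in U}B$ with $\QCoh(\psi(U),\alpha)$ for quasi-compact opens $U$ (Proposition~\ref{prop:quotients}). From this stacky equivalence the paper builds, open by open and with explicit strictification of the higher coherences, a Zariski prestack of small dg categories of strictly perfect complexes, takes cofibrant module categories to get a Zariski stack of locally presentable dg categories, then extends to an \'etale stack by a homotopy limit construction, and finally identifies the result with Toën's $L_\alpha$. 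Only after all of this can Corollary~3.12 be invoked. Your plan needs this intermediate stack-level equivalence of abelian categories and the subsequent passage to a stack of dg categories spelled out; without them the invocation of Toën is not justified.
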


We also prove a similar result for noetherian schemes $X$ and $Y$ where we only require
$\alpha\in\Br'(X)$ and $\beta\in\Br'(Y)$. Our argument relies on a reconstruction
theorem of Perego~\cite{perego}.

\caldararu's conjecture has been established previously by Canonaco and
Stellari~\cite{canonaco-stellari} when $X$ and $Y$ are smooth projective varieties over an
algebraically closed field.
The methods of Canonaco and Stellari use Fourier-Mukai functors and pass through derived
categories. Our methods also use derived categories, or rather their dg categorical
enhancements.

The proof has three main steps. First, we show that $X$ as in the theorem can be
reconstructed from the abelian category $\QCoh(X,\alpha)$. This is an extension of a theorem
of Rosenberg~\cite{rosenberg} and has its roots in Gabriel's thesis~\cite{gabriel}, where
the statement for abelian categories of untwisted quasi-coherent sheaves was proved for
noetherian schemes. Second, we show that the reconstruction theorem results in an
isomorphism $f:X\rightarrow Y$ and an equivalence $\StQCoh(\alpha)\we\StQCoh(f^*(\beta))$ of
Zariski stacks of abelian categories on $X$. It remains to prove that the existence of this
equivalence of stacks implies that
$f^*(\beta)=\alpha$. We do this by inducing an equivalence
$\Dscr^{\et}_{dg}(\alpha)\we\Dscr^{\et}_{dg}(f^*(\beta))$ of \'etale stacks of dg categories. But, To\"en showed that the
derived Brauer group $\dBr(X)=\Hoh^1_{\et}(X,\ZZ)\times\Hoh^2_{\et}(X,\Gm)$ classifies
stacks of dg categories that are \'etale locally derived Morita equivalent to the base.
Since $\Br(X)\subseteq\dBr(X)$, the theorem follows. Instead of using the results of To\"en, we could also
use stable $\infty$-categories and the work of~\cite{ag}.

It is worth pointing out another recent reconstruction theorem, proven by
Lurie~\cite{lurie-tannaka}. Lurie shows that $\otimes$-functors from $\QCoh(Y)$ to $\QCoh(X)$
correspond to maps $X\rightarrow Y$ when $X$ is a scheme and $Y$ is a geometric (Artin) stack. One might attempt to apply this
in our situation, but $\QCoh(X,\alpha)$ for instance is not a $\otimes$-category. It is
natural then to attempt to replace $\QCoh(X,\alpha)$ by $\QCoh(\Xscr)$, where
$\Xscr\rightarrow X$ is the $\Gm$-gerbe corresponding to $\alpha$. This is a
$\otimes$-category, and it decomposes as
\begin{equation*}
    \QCoh(\Xscr)\we\prod_{k\in\ZZ}\QCoh(X,\alpha^k).
\end{equation*}
However, in order to prove our theorem using a technique like this, we would need to pass from
an equivalence $\QCoh(X,\alpha)\we\QCoh(Y,\beta)$ to an equivalence
$\QCoh(X,\alpha^k)\we\QCoh(Y,\beta^k)$ for all integers $k$. We do not see how to
make this jump, at least without using \caldararu's conjecture. But, Lurie's result does say
that if $\QCoh(\Xscr)\we\QCoh(\Yscr)$ is a $\otimes$-equivalence respecting the character
decompositions on both sides, then $\Xscr\iso\Yscr$, where $\Yscr\rightarrow Y$ is
the $\Gm$-gerbe on $Y$ associated to $\beta$. Indeed, the natural map
$\QCoh(X)\rightarrow\QCoh(\Xscr)$ is a $\otimes$-functor; composition then gives a
$\otimes$-functor $\QCoh(X)\rightarrow\QCoh(\Yscr)$, which induces a map
$f:X\rightarrow\Yscr\rightarrow Y$. By reversing the argument, we see that $f$ is an
isomorphism. It follows that
$f^*\Yscr\we\Xscr$ as $\Gm$-gerbes, so that $\alpha=f^*\beta$. Thus, with the stronger
hypothesis that there is a $\otimes$-equivalence $\QC(\Xscr)\we\QC(\Yscr)$ respecting
character decompositions, our theorem goes
through, using~\cite{lurie-tannaka}.

\paragraph{Acknowledgments.} We thank the referee for their comments, which resulted in
a more precise exposition.

\section{Background on abelian categories}

In this section, we review some basic concepts from the theory of abelian categories. These
ideas are due to Gabriel, Grothendieck, and Serre. For details and proofs,
see~\cite{gabriel} or~\cite{grothendieck-tohoku}.

We consider two types of abelian categories, those that behave like the abelian category of
quasi-coherent sheaves $\QCoh(X)$ when $X$ is
quasi-compact and quasi-separated, and those that behave like the abelian category of
coherent sheaves $\Coh(X)$ when $X$ is noetherian. These are ``big''
and ``small'' abelian categories, respectively.

\begin{definition}
    Let $A$ be an abelian category. A non-empty full subcategory $B\subseteq A$ is thick (or
    \'epaisse) if it is closed under taking subobjects, quotients, and extensions; in other
    words, if for every exact sequence
    \begin{equation*}
        0\rightarrow M'\rightarrow M\rightarrow M''\rightarrow 0
    \end{equation*}
    in $A$, we have $M\in B$ if and only if $M'$ and $M''$ are in $B$.
\end{definition}

If $B\subseteq A$ is thick, then following Serre we can often define a quotient abelian category
$A/B$, which has as objects the same objects as $A$ and where
\begin{equation}\label{eq:serre}
    \Hom_{A/B}(M,N)=\colim\Hom_A(M',N/N')
\end{equation}
where the colimit is over all $M'\subseteq M$ and $N'\subseteq N$ where $M/M'\in B$ and
$N'\in B$ and is taken in the abelian category of abelian groups. In order for the colimit
to exist in $\Mod_\ZZ$ (in our fixed universe), we need to know that the colimit diagram is
essentially small. This is guaranteed if either $A$ is essentially small (as is the case for
$\Coh(X)$ when $X$ is abelian) or is a
Grothendieck abelian category (such as $\QCoh(X)$ for $X$ quasi-compact and quasi-separated), a concept we now define.

\begin{definition}
    An abelian category $A$ is said to satisfy AB3 if it has all small coproducts (this
    implies that $A$ has all colimits). An
    abelian category $A$ satisfies AB5 if it satisfies AB3 and if the following
    condition holds: whenever
    $M=\cup_{i=1}^\infty M_i$ where $M_1\subseteq M_2\subseteq\cdots$, the natural map
    $\Hom(M,N)\rightarrow\lim\Hom(M_i,N)$ is an isomorphism for all $N$. A generator of an abelian
    category is an object $U$ such that if $N\subseteq M$ is a proper subobject, there is a
    morphism $U\rightarrow M$ that does not factor through $N$. An abelian category is
    called a Grothendieck abelian category if it satisfies AB5 and has a generator.
\end{definition}

If $A$ possesses a generator, then every object of $A$ has a set of subobjects and quotient
objects. It follows that for any thick subcategory $B\subseteq A$, the colimit appearing
in~\eqref{eq:serre} is over a small diagram, and hence exists in $\Mod_\ZZ$. This will
guarantee that all quotients taken in this paper are well-defined without expanding the
universe.

\begin{proposition}
    Let $B\subseteq A$ be a thick subcategory, and assume that $B/A$ exists. Then,
    \begin{enumerate}
        \item   $A/B$ is abelian;
        \item   the natural map $A\rightarrow A/B$ is exact.
    \end{enumerate}
\end{proposition}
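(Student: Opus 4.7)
The plan is to work throughout with the explicit description of morphisms in $A/B$ given by the colimit in equation~\eqref{eq:serre}. The first task is to observe that this colimit is filtered: if $M/M_1$ and $M/M_2$ both lie in $B$, then $M/(M_1 \cap M_2)$ embeds into $M/M_1 \oplus M/M_2$, which lies in $B$ by closure under extensions, and hence $M/(M_1 \cap M_2) \in B$ by closure under subobjects. A dual argument, using that $(N_1' + N_2')/N_1'$ is a quotient of $N_2'$ (hence in $B$) and invoking closure under extensions, handles the system of $N' \subseteq N$ with $N' \in B$. Filteredness ensures both that $\Hom_{A/B}(M,N)$ is an abelian group (being a filtered colimit of abelian groups) and that composition is well-defined: given representatives of two composable classes, one restricts the source and enlarges the target to a common refinement and composes in $A$, then checks independence of choices by another cofinality argument.

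Next, with composition in place, I would identify kernels, cokernels, images, and coimages in $A/B$ by descending the corresponding constructions from $A$. Specifically, a morphism $\bar f$ represented by $f \colon M' \to N/N'$ admits $\ker(f)$ as its kernel and $\mathrm{coker}(f)$ as its cokernel in $A/B$; independence of representative follows from filteredness together with the observation that passing to a subobject $M'' \subseteq M'$ with $M'/M'' \in B$ changes $\ker(f)$ only by an object of $B$, which is identified with $0$ in $A/B$. The zero morphism in $\Hom_{A/B}(M,N)$ is precisely the class of those $f$ whose image lies in $B$. The key abelian axiom, that the natural map $\mathrm{coker}(\ker \bar f) \riso \ker(\mathrm{coker}\,\bar f)$ is an isomorphism in $A/B$, then follows by applying the corresponding isomorphism in $A$ to the representative $f$ and noting that any discrepancy is built from subquotients of objects of $B$, hence vanishes in $A/B$.

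For part (ii), given a short exact sequence $0 \to M' \to M \to M'' \to 0$ in $A$, the inclusion and projection serve as their own representatives (no refinements needed), and the kernel/cokernel descriptions above show immediately that the induced sequence in $A/B$ remains exact. The main obstacle will be the bookkeeping in paragraph two: one must verify that every candidate kernel, cokernel, and coimage-image comparison is independent of the chosen representative and genuinely satisfies its universal property against morphisms in $A/B$, which themselves are described only up to filtered colimit equivalence. All three closure properties of $B$ play a role: extensions ensure filteredness of the indexing system, subobjects allow one to recognize when a refined representative factors through $B$, and quotients are needed when descending cokernels through enlargements of the target subobject.
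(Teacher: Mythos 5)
The paper itself gives no proof of this proposition --- it is background material cited to Gabriel's thesis and Grothendieck's T\^ohoku paper --- and your outline is precisely the classical argument from those sources: filteredness of the indexing system of pairs $(M',N')$ via the three closure properties of $B$, computation of kernels and cokernels in $A/B$ from a representative $f\colon M'\to N/N'$, and exactness of $A\to A/B$ read off from that description. Your sketch is correct in outline and takes the same route as the cited proof, with the bookkeeping you defer (well-definedness of composition and of kernels/cokernels against arbitrary representatives) being exactly the details supplied in Gabriel, Chapter III.
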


\begin{definition}
    A thick subcategory $B\subseteq A$ is localizing if the functor $j:A\rightarrow A/B$
    admits a right adjoint $j_*$.
\end{definition}

\begin{lemma}
    If $B\subseteq A$ is localizing, then the right adjoint $j_*$ is fully faithful and
    left exact.
    \begin{proof}
        The fully faithfulness of $j_*$ can be seen by applying~\eqref{eq:serre} to two
        objects in the image of $j_*$: they have no non-zero subquotients contained in
        $B$. Since $j_*$ is a right adjoint, it preserves limits that exist. Since $A$
        and $A/B$ have finite limits, it preserves finite limits. But, kernels are finite limits,
        hence $j_*$ is left exact.
    \end{proof}
\end{lemma}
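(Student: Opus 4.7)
The plan is to handle left exactness and fully faithfulness separately.

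Left exactness is formal: since $j_*$ is a right adjoint, it preserves all limits that exist in $A/B$. Because $A/B$ is abelian by the preceding proposition, it possesses finite limits, so $j_*$ preserves them; in particular, $j_*$ preserves kernels, which is left exactness.

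For fully faithfulness, I will use the standard criterion that $j_*$ is fully faithful if and only if the counit $\epsilon\colon jj_* \to \id_{A/B}$ is a natural isomorphism. The key intermediate step, suggested by the explicit formula~\eqref{eq:serre}, is to show that $j_*M$ has no nonzero subobject belonging to $B$ for any $M \in A/B$. I prove this directly from the adjunction: if $K \hookrightarrow j_*M$ with $K \in B$, then $\Hom_A(K, j_*M) \cong \Hom_{A/B}(jK, M) = \Hom_{A/B}(0, M) = 0$, so the inclusion vanishes and $K = 0$.

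With this closure property in hand, I analyze the colimit in~\eqref{eq:serre} for $\Hom_{A/B}(M, N)$. For each $X \subseteq j_*M$ with $j_*M/X \in B$, the induced map $jX \to jj_*M$ is an isomorphism in $A/B$ by exactness of $j$, since $j(j_*M/X) = 0$. Combined with the adjunction isomorphism $\Hom_A(X, j_*N) \cong \Hom_{A/B}(jX, N)$ and the fact that the ``$Y \subseteq j_*N$ with $Y \in B$'' variable in~\eqref{eq:serre} collapses to $Y = 0$ by the no-nonzero-subobject property applied to $j_*N$, this should exhibit the desired bijection $\Hom_{A/B}(M, N) \cong \Hom_A(j_*M, j_*N)$.

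The main obstacle I anticipate is bookkeeping: ensuring that the composite identifications through~\eqref{eq:serre}, through the adjunction, and through the isomorphism $jX \cong jj_*M$ assemble into the structural comparison map induced by $j_*$, rather than some other coincidental bijection. Verifying naturality in $M$ and $N$ amounts to the same calculation performed compatibly across varying arguments. Once this matches up, both halves of the lemma are established.
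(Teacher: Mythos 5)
Your proof takes the same route as the paper---both feed objects in the image of $j_*$ into the formula~\eqref{eq:serre} and collapse the resulting colimit---and your left-exactness argument is identical. One correction relative to the paper's phrasing is worth making explicit: the paper asserts that objects in the image of $j_*$ have no nonzero \emph{subquotients} lying in $B$, but this is false in general (take $A = \Mod_\ZZ$, $B$ the torsion modules; then $j_*\QQ = \QQ$ has $\QQ/\ZZ$ as a nonzero torsion subquotient). What is true, and what your adjunction computation correctly proves, is that $j_*N$ has no nonzero \emph{subobjects} in $B$. You then handle the other colimit variable not by a vanishing claim but by observing that $jX \to jj_*M$ is an isomorphism whenever $j_*M/X \in B$; this is a cleaner substitute for the $\Ext$-vanishing one would otherwise need to show the restriction maps $\Hom_A(j_*M,j_*N)\to\Hom_A(X,j_*N)$ are surjective. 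As for the bookkeeping you flag: it is a genuine step and does not resolve itself. Tracing the identifications shows that the composite isomorphism $\Hom_{A/B}(jj_*M, jj_*N) \to \Hom_{A/B}(jj_*M, N)$ you produce is postcomposition with the counit $\epsilon_N\colon jj_*N\to N$. Since $j$ is essentially surjective, $(\epsilon_N)_*$ is then bijective on $\Hom_{A/B}(P,-)$ for every $P$, so $\epsilon_N$ is an isomorphism by Yoneda, which is equivalent to full faithfulness of $j_*$; that final reduction is what actually closes the argument.
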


The following proposition is very useful for checking that a subcategory of a Grothendieck
abelian category is localizing.

\begin{proposition}\label{prop:localizing}
    Let $A$ be a Grothendieck abelian category, and let $B\subseteq A$ be a thick
    subcategory. Then, the following are equivalent:
    \begin{enumerate}
        \item   $B\subseteq A$ is localizing;
        \item   the inclusion $B\rightarrow A$ admits a right adjoint;
        \item   the inclusion $B\rightarrow A$ preserves colimits;
        \item   every object $M$ of $A$ contains a maximal subobject contained in $B$.
    \end{enumerate}
    \begin{proof}
        This is left to the reader. We remind them of the adjoint functor theorem and the
        fact that left adjoints preserve colimits. For details, see
        Gabriel~\cite{gabriel}*{Section III}.
    \end{proof}
\end{proposition}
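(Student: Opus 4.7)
The plan is to establish the chain $(4) \Rightarrow (2) \Rightarrow (3) \Rightarrow (4)$ using only thickness and the Grothendieck hypothesis, and then to address $(1) \Leftrightarrow (2)$ separately. The first chain carries little content beyond unpacking definitions, while $(2) \Leftrightarrow (1)$ is the substantive step.

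For $(4) \Rightarrow (2)$, I would construct a right adjoint $t\colon A \to B$ by sending $M$ to its maximal subobject in $B$; functoriality follows because any morphism $f\colon M \to N$ carries $t(M)$ to a subobject of $N$ which is a quotient of $t(M)$, hence lies in $B$ by thickness and therefore inside $t(N)$. The adjunction $\Hom_A(iN, M) \cong \Hom_B(N, t(M))$ is then immediate, since any morphism $N \to M$ with $N \in B$ has image in $B$ by thickness and therefore factors through $t(M)$. For $(2) \Rightarrow (3)$, the inclusion is then a left adjoint and so preserves colimits. For $(3) \Rightarrow (4)$, pick $M \in A$: since $A$ is Grothendieck it is well-powered, so the subobjects of $M$ lying in $B$ form a set $\{N_\lambda\}$; by (3) the coproduct $\bigoplus_\lambda N_\lambda$ computed in $A$ already lies in $B$, and the image of the summation map $\bigoplus_\lambda N_\lambda \to M$ is a subobject of $M$ which is also a quotient of an object of $B$, hence in $B$ by thickness, and is plainly maximal.

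For $(1) \Rightarrow (2)$, let $\eta\colon \id_A \to j_* j$ be the unit of the localizing adjunction. Define $t(M) := \ker(\eta_M)$. By the preceding lemma $j_*$ is fully faithful, so the counit $j j_* \to \id_{A/B}$ is an isomorphism; applying the exact functor $j$ to $\eta_M$ then shows $j(t(M)) = 0$, placing $t(M)$ in $B$. Any subobject $N \subseteq M$ with $N \in B$ satisfies $jN = 0$, so the composition $N \to M \to j_* j M$ is zero and $N \subseteq t(M)$, which establishes maximality and yields (4), hence (2).

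The main obstacle is $(2) \Rightarrow (1)$. The plan is to identify $A/B$ with the reflective full subcategory $C \subseteq A$ of $B$-closed objects, namely those $M$ with $\Hom_A(N,M) = \Ext^1_A(N,M) = 0$ for every $N \in B$, and to construct $j_*$ as the reflection. The torsion functor $t$ from (2) reduces the problem to the $B$-torsion-free case via $M \mapsto M/t(M)$; one then kills the remaining $\Ext^1$-classes by transfinite iteration, extending $M/t(M)$ along universal $B$-extensions and reapplying the torsion quotient, using the existence of injective hulls in the Grothendieck category $A$ together with well-poweredness to guarantee the process stabilizes. Equivalently, since $j$ preserves colimits and $A$ is presentable, one can invoke the special adjoint functor theorem directly to produce $j_*$. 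The full details are carried out in Gabriel's thesis, Chapitre III.
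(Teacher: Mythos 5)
Your outline is correct and follows essentially the route the paper's hints suggest (the paper itself defers entirely to Gabriel, Section III). The cycle $(4)\Rightarrow(2)\Rightarrow(3)\Rightarrow(4)$, the torsion‑functor construction for $(4)\Rightarrow(2)$, the well‑poweredness argument for $(3)\Rightarrow(4)$, and the identification $t(M)=\ker(\eta_M)$ for $(1)\Rightarrow(4)$ are all sound; the use of the special adjoint functor theorem for $(2)\Rightarrow(1)$ is exactly what the paper's ``adjoint functor theorem'' hint points at.

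One link in the SAFT route deserves to be made explicit: you need $j\colon A\to A/B$ to preserve \emph{all} small colimits, and while exactness gives finite colimits, preservation of infinite coproducts is not automatic. It follows from the fact that $B$ is closed under coproducts in $A$ (which you have from $(2)\Rightarrow(3)$), but passing from ``$B$ closed under coproducts'' to ``$j$ preserves coproducts'' is a genuine lemma about Serre quotients (Gabriel, III.4) -- one unwinds the colimit formula for $\Hom_{A/B}$ and checks that a map out of $j(\bigoplus_i M_i)$, represented by a map on a subobject $M'\subseteq\bigoplus_i M_i$ with $(\bigoplus_i M_i)/M'\in B$, is determined by its restrictions to the $M_i\cap M'$. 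Without this, the SAFT hypothesis ``$j$ preserves colimits'' is unjustified. Your alternative $(2)\Rightarrow(1)$ via $B$-closed objects and transfinite small object argument is the classical Gabriel construction and is fine as a sketch, though it is doing more work than is needed once the coproduct lemma is in place.
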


\section{Abelian categories of twisted sheaves}\label{sec:abtwist}

Let $X$ be a scheme and let $\alpha\in\Hoh^2(X,\Gm)$ be represented by a $\Gm$-gerbe
$\Xscr\rightarrow X$. Then, we write $\QCoh(X,\alpha)$ for the abelian category of
$\Xscr$-twisted sheaves $\QCoh^{\tw}(\Xscr)$, defined for instance in
Lieblich~\cite{lieblich-moduli}. An $\Xscr$-twisted sheaf is a quasi-coherent sheaf on
$\Xscr$ such that the inertial action of $\Gm$ on the left agrees with the action
through $\Oscr_{\Xscr}$. If $\alpha$
is the Brauer class of an Azumaya algebra $\Ascr$ on $X$, then
$\QCoh(X,\alpha)\we\QCoh(X,\Ascr)$, where $\QCoh(X,\Ascr)$ denotes the abelian category of quasi-coherent left $\Ascr$-modules on
$X$. If $f:Y\rightarrow X$ is an $X$-scheme, write $\QCoh(Y,\alpha)$ for
$\QCoh(Y,f^*(\alpha))$. This defines a prestack $\StQCoh(\alpha)$ of abelian categories over $X$.

\begin{proposition}
    The prestack $\StQCoh(\alpha)$ of $\Xscr$-twisted sheaves forms a stack of abelian
    categories on the big \'etale site over $X$.
    \begin{proof}
        See~\cite{lieblich-moduli}*{Proposition 2.1.2.3}.
    \end{proof}
\end{proposition}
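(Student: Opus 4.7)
The plan is to reduce étale descent for $\StQCoh(\alpha)$ to the standard fact that quasi-coherent sheaves satisfy fpqc (and hence étale) descent on algebraic stacks, restricted to a condition that is local on the base.

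First I would set up the local picture. For any $X$-scheme $f: V \to X$, the pulled-back $\Gm$-gerbe $\Xscr_V = \Xscr \times_X V \to V$ represents $f^*(\alpha)$, and $\QCoh(V,\alpha)$ is by definition the full subcategory of $\QCoh(\Xscr_V)$ consisting of those $\Oscr_{\Xscr_V}$-modules for which the inertial $\Gm$-action agrees with the action through $\Oscr_{\Xscr_V}$. So the prestack $V \mapsto \QCoh(V,\alpha)$ sits as a full subprestack of the prestack $V \mapsto \QCoh(\Xscr_V)$.

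Next I would invoke the descent results for the ambient prestack. Given an étale cover $\{V_i \to V\}$ in the big étale site over $X$, the base changes $\{\Xscr_{V_i} \to \Xscr_V\}$ form an étale cover of algebraic stacks, and $\QCoh$ is known to satisfy fpqc descent on such stacks. Thus $\QCoh(\Xscr_V)$ is equivalent to the limit of the cosimplicial category $\QCoh(\Xscr_{V_\bullet})$ built from the Čech nerve of the cover.

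The key remaining step is to argue that being $\Xscr_V$-twisted is a local condition. Both the inertial action and the action through $\Oscr_{\Xscr_V}$ define two morphisms of sheaves of groups $\Gm \to \ShAut_{\Oscr}(\mathscr{F})$ on $\Xscr_V$; their equality is a condition on a sheaf equalizer, which is detected on any étale cover. Hence a descent datum in $\QCoh(\Xscr_{V_\bullet})$ whose restrictions to each $V_i$ lie in the twisted subcategory glues to a twisted object on $V$, and similarly for morphisms (which are just morphisms of $\Oscr$-modules between twisted sheaves). Combining these three observations gives that $\StQCoh(\alpha)$ satisfies étale descent; I would expect the mildest obstacle to be notational bookkeeping verifying that the full subcategory inclusion is strictly compatible with the pullback functors defining the descent data, which is routine.
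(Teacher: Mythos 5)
The paper gives no internal proof here, delegating entirely to Lieblich~\cite{lieblich-moduli}*{Proposition 2.1.2.3}; your argument correctly reconstructs what that citation encodes, namely fpqc descent for $\QCoh$ over the pulled-back gerbes $\Xscr_V\to V$ combined with the observation that the twisted condition---agreement of the inertial $\Gm$-action with the module $\Gm$-action, i.e.\ an equalizer of two maps of sheaves of groups---is an \'etale-local condition. Your proof is correct and takes essentially the same route as the cited source.
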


\begin{proposition}
    The abelian category $\QCoh(X,\alpha)$ is a Grothendieck abelian category when $X$ is
    quasi-compact and quasi-separated.
    \begin{proof}
        This follows because the abelian category of all quasi-coherent modules on $\Xscr$
        is a Grothendieck abelian category. The main task is to produce a generator, but
        this can be done by \'etale descent. Or, when $\alpha\in\Br(X)$, using the equivalence
        $\QCoh(X,\alpha)\we\QCoh(X,\Ascr)$, we can produce a generating set by taking a
        representative set of all $\Ascr$-modules of finite type. The direct sum of the
        elements of this set will be a generator.
    \end{proof}
\end{proposition}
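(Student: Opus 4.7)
The plan is to verify the four defining features of a Grothendieck abelian category in turn, transferring the first three from $\QCoh(\Xscr)$ and arguing for the fourth by hand. I would begin by observing that the twisted-sheaf condition on a quasi-coherent $\Oscr_\Xscr$-module is the equality of two $\Oscr_\Xscr$-linear actions, hence preserved by kernels, cokernels, extensions, and arbitrary colimits. More conceptually, decomposing along the characters of the inertial $\Gm$ yields a product decomposition $\QCoh(\Xscr)\we\prod_{k\in\ZZ}\QCoh(X,\alpha^k)$, exhibiting $\QCoh(X,\alpha)$ as the weight-one direct factor. Granting that $\QCoh(\Xscr)$ is Grothendieck, one obtains abelianness, AB3, and AB5 for $\QCoh(X,\alpha)$ automatically. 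The fact that $\QCoh(\Xscr)$ is Grothendieck is standard for any qcqs algebraic stack: $\Xscr\rightarrow X$ is smooth with $X$ qcqs, and a generating family is constructed by étale descent from the classical statement for qcqs schemes.

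It then remains to produce a generator of $\QCoh(X,\alpha)$ itself, and I would address this separately in the two settings of the main theorem. When $\alpha\in\Br(X)$, pick an Azumaya algebra $\Ascr$ on $X$ with Brauer class $\alpha$, so that $\QCoh(X,\alpha)\we\QCoh(X,\Ascr)$. If $\Gscr$ is a generator of $\QCoh(X)$, the restriction/induction adjunction gives $\Hom_\Ascr(\Ascr\otimes_{\Oscr_X}\Gscr,M)=\Hom_{\Oscr_X}(\Gscr,M)$ for every $\Ascr$-module $M$; any proper $\Ascr$-submodule $N\subsetneq M$ is in particular a proper $\Oscr_X$-submodule, so a separating map $\Gscr\rightarrow M$ lifts to $\Ascr\otimes_{\Oscr_X}\Gscr\rightarrow M$ not factoring through $N$. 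An alternative phrasing is that the direct sum of a set of representatives of isomorphism classes of finite-type quasi-coherent $\Ascr$-modules is a generator, using qcqs-ness of $X$ to ensure essential smallness of this collection. For the more general $\alpha\in\Hoh^2_\et(X,\Gm)$, one would instead use étale descent: on a qcqs étale cover $U\rightarrow X$ trivializing $\Xscr$, the restriction of $\QCoh(X,\alpha)$ is equivalent to $\QCoh(U)$, and pushforward of a local generator produces a global one.

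The main obstacle I expect is honest bookkeeping around the generator. In the Azumaya case the Brauer-class hypothesis does real work, since without a globally defined $\Ascr$ one cannot directly induce up a generator of $\QCoh(X)$; one needs either to assume $\alpha\in\Br(X)$ or to carry out the descent construction carefully. Correspondingly, the only genuinely delicate step is checking essential smallness of the class of finite-type $\Ascr$-modules on a qcqs scheme, which I would handle by a standard reduction to affine opens and a noetherian approximation argument on each. Everything else is formal inheritance from $\QCoh(\Xscr)$.
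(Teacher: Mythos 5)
Your proposal is correct and follows the same basic strategy as the paper's terse proof: inherit abelianness, AB3, and AB5 from the Grothendieck category $\QCoh(\Xscr)$ (via the character decomposition $\QCoh(\Xscr)\we\prod_{k}\QCoh(X,\alpha^k)$), and then produce a generator by \'etale descent in general or directly in the Azumaya case. Where you differ is in the Azumaya-case generator: the paper takes the direct sum of a representative set of finite-type $\Ascr$-modules, while your primary argument inducts a known generator $\Gscr$ of $\QCoh(X)$ up to $\Ascr\otimes_{\Oscr_X}\Gscr$ and uses the restriction/induction adjunction $\Hom_\Ascr(\Ascr\otimes_{\Oscr_X}\Gscr,M)\iso\Hom_{\Oscr_X}(\Gscr,M)$ together with the observation that a proper $\Ascr$-submodule is a proper $\Oscr_X$-submodule. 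The induction argument is cleaner: it sidesteps the essential-smallness bookkeeping for finite-type $\Ascr$-modules that you flag (and that the paper silently assumes), reducing everything to the already-known existence of a generator of $\QCoh(X)$ on a qcqs scheme. Both arguments are valid, and you correctly identify the general case as requiring descent when no global Azumaya algebra is available.
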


\begin{remark}
    The stack $\StQCoh(\alpha)$ is a stack of $\Oscr_X$-linear abelian categories, in the
    sense that for every $U\subseteq X$ there is the structure of $\Gamma(U,\Oscr_X)$-linear
    abelian category on $\StQCoh(U,\alpha)$, and these are compatible with restriction.
\end{remark}

\section{Quasi-coherent reconstruction}

In this section, we prove the first part of the reconstruction theorem.

\begin{theorem}\label{thm:main}
    Suppose that $X$ and $Y$ are quasi-compact and quasi-separated schemes over a
    commutative ring $R$ with Brauer
    classes $\alpha\in\Br(X)$ and $\beta\in\Br(Y)$. If there is an equivalence of $R$-linear abelian
    categories $F:\QCoh(X,\alpha)\rwe\QCoh(Y,\beta)$, then
    there exists a unique isomorphism $f:X\rightarrow Y$ of $R$-schemes compatible with
    $F$ via supports in the sense that if $M$ is an object of $\QCoh(X,\alpha)$, then
    $f(\supp_X(M))=\supp_Y(F(M))$. Moreover, $f$ induces an
    equivalence of stacks of
    $\Oscr_X$-linear abelian categories $\StQCoh(\alpha)\we\StQCoh(f^*(\beta))$ on $X$.
\end{theorem}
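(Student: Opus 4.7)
The plan is to extend Rosenberg's reconstruction of a quasi-compact quasi-separated scheme from $\QCoh(X)$ to the twisted case. Since $\alpha\in\Br(X)$, we may choose an Azumaya algebra $\Ascr$ representing $\alpha$ and work with the Morita equivalent category $\QCoh(X,\Ascr)$, which carries a canonical $\Oscr_X$-linear action coming from $Z(\Ascr)=\Oscr_X$ and has $\Ascr$ itself as a projective generator of finite type. The argument rests on two ingredients: the Morita invariance of the lattice of localizing subcategories of a Grothendieck abelian category, and of its center, viewed as the ring of natural transformations of the identity functor.

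First I would reconstruct the underlying topological space of $X$. For each quasi-compact open $U\subseteq X$ with closed complement $Z$, the full subcategory $\QCoh_Z(X,\alpha)\subseteq\QCoh(X,\alpha)$ of twisted sheaves set-theoretically supported on $Z$ is localizing by Proposition~\ref{prop:localizing}, and the natural functor from the Serre quotient to $\QCoh(U,\alpha|_U)$ is an equivalence. Following Rosenberg, I would then characterize $|X|$ intrinsically as a space of ``prime'' localizing subcategories of $\QCoh(X,\alpha)$, using the projective generator $\Ascr$ as a substitute for the monoidal unit of Rosenberg's original setup. The support $\supp_X(M)$ is recovered as the smallest $Z$ with $M\in\QCoh_Z(X,\alpha)$, and the quasi-compact opens of $X$ appear as a distinguished sublattice of localizing subcategories.

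Second I would recover the structure sheaf by setting $\Gamma(U,\Oscr_X)=Z(\QCoh(U,\alpha|_U))$, which by Morita invariance agrees with $Z(\QCoh(U,\Ascr|_U))=\Gamma(U,\Oscr_X)$ since $\Ascr$ is Azumaya. The given equivalence $F$ carries localizing subcategories to localizing subcategories and preserves centers, so it induces a homeomorphism $f\colon|X|\to|Y|$ satisfying $f(\supp_X(M))=\supp_Y(F(M))$, together with compatible ring isomorphisms $\Gamma(U,\Oscr_X)\iso\Gamma(f(U),\Oscr_Y)$. These are $R$-linear by hypothesis on $F$, so $f$ lifts uniquely to an isomorphism of $R$-schemes; uniqueness of $f$ follows from the support characterization, since any other $f'$ with the same property would induce the same bijection on support localizing subcategories.

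For the stack equivalence, $F$ restricts to an equivalence between $\QCoh_Z(X,\alpha)$ and $\QCoh_{f(Z)}(Y,\beta)$ and hence descends to an $R$-linear, and in fact $\Gamma(U,\Oscr_X)$-linear, equivalence $\QCoh(U,\alpha|_U)\simeq\QCoh(f(U),\beta|_{f(U)})$ on Serre quotients for every quasi-compact open $U\subseteq X$. Naturality of the Serre quotient in $U$ packages these equivalences into the required equivalence of Zariski stacks $\StQCoh(\alpha)\simeq\StQCoh(f^*\beta)$ on $X$. The main obstacle is the first step: Rosenberg's original argument for the untwisted case relies on the monoidal structure of $\QCoh(X)$, which $\QCoh(X,\alpha)$ does not possess, so his description of points and of the support lattice must be rephrased in purely abelian-categorical terms, with the Azumaya generator $\Ascr$ playing the role of the tensor unit and the center $\Oscr_X$ giving the required $\Oscr_X$-linear structure.
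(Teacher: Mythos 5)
Your outline correctly tracks the structure of the paper's argument: reconstruct the underlying space from localizing subcategories, recover the structure sheaf from centers of the Serre quotients, and assemble the stack equivalence from the resulting equivalences over quasi-compact opens. But you yourself flag the crux --- the intrinsic, purely abelian-categorical description of the points of $X$ inside $\QCoh(X,\alpha)$ --- and the proposal does not carry it out. As written it is a plan, not a proof.

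Here is what is actually needed at that step, and what the paper supplies. The spectrum $\Sp A$ is defined to be the class of localizing subcategories $B\subseteq A$ such that $A/B$ is \emph{quasi-local}, meaning it has a \emph{quasi-final} object: an object lying in $\add M$ for every non-zero $M$ of $A/B$. This definition invokes neither $\Ascr$ nor any tensor structure, so it is automatically preserved by any abelian-categorical equivalence; the Azumaya algebra enters only in proving the bijection $\Sp\QCoh(X,\alpha)\leftrightarrow X$. One direction sends $B$ to the generic point of $\supp(S_B)$; the inverse sends $x$ to $\phi(x)=\{M:\Ascr(x)\notin\add M\}$, where $\Ascr(x)=\Ascr/p(x)$ and $p(x)=\ker(\Ascr\rightarrow\Ascr\otimes_{\Oscr_X}k(x))$. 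The non-trivial content your proposal leaves open is that $\phi(x)$ is localizing and that $\Ascr(x)$ is quasi-final in the quotient, which reduces to showing $\Ascr(x)\in\add N$ for every non-zero submodule $N\subseteq\Ascr(x)$. Locally on an affine $U$ one reduces to a finite-type $N$, observes that the two-sided annihilator of $N(U)$ in $\Ascr(x)(U)$ dies after tensoring with $k(y)$ for all $y\in\overline{\{x\}}$ because $\Ascr(x)\otimes k(y)$ is central simple --- this is exactly where Azumaya-ness is used, not merely that $\Ascr$ is a projective generator --- concludes by Nakayama that the annihilator is zero, and then embeds $\Ascr(x)(U)$ into $\bigoplus_i\Ascr(x)(U)/\ann(a_i)$ for generators $a_i$ of $N(U)$, each summand lying in $\add N(U)$; one globalizes using pushforward along finitely many quasi-compact open immersions covering $\overline{\{x\}}$. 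Finally, you propose to identify quasi-compact opens of $X$ with the subcategories $\QCoh_Z(X,\alpha)$, but one must first say how to single those out intrinsically; the paper instead topologizes $\Sp A$ directly, taking as a basis of closed sets the $\supp_{\Sp}(M)$ for finitely presented $M$, and then verifies that $\psi$ matches this with the Zariski topology.
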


\begin{corollary}
    Suppose that $X$ and $Y$ are quasi-compact and quasi-separated schemes such that
    $\QCoh(X)\we\QCoh(Y)$ as abelian categories. Then, $X\iso Y$.
\end{corollary}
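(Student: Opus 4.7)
The plan is to deduce this corollary from Theorem~\ref{thm:main} by specializing to the trivial Brauer classes. Every scheme is canonically a $\ZZ$-scheme, so I take $R=\ZZ$ and set $\alpha=0\in\Br(X)$ and $\beta=0\in\Br(Y)$. The trivial class on $X$ is represented by the trivial Azumaya algebra $\Oscr_X$, so from the identification recorded in Section~\ref{sec:abtwist} we have $\QCoh(X,\alpha)\we\QCoh(X,\Oscr_X)=\QCoh(X)$, and similarly $\QCoh(Y,\beta)\we\QCoh(Y)$. Under these identifications the hypothesized abelian equivalence $F:\QCoh(X)\rwe\QCoh(Y)$ becomes an abelian equivalence $\QCoh(X,\alpha)\rwe\QCoh(Y,\beta)$.

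Next I would verify that this equivalence is $\ZZ$-linear, which costs nothing: any additive functor between $\ZZ$-linear abelian categories is automatically $\ZZ$-linear, since the $\ZZ$-module structure on a $\Hom$-group is determined by the addition. The hypotheses of Theorem~\ref{thm:main} are therefore satisfied, and invoking it produces a unique isomorphism $f:X\rightarrow Y$ of $\ZZ$-schemes, which is exactly an isomorphism of schemes.

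There is no real obstacle here, as the substantive content has been absorbed into Theorem~\ref{thm:main}; the only points to check are the formal identification $\QCoh(X,0)\we\QCoh(X)$ and the automatic $\ZZ$-linearity of additive functors. The extra conclusion of the theorem about the induced equivalence $\StQCoh(\alpha)\we\StQCoh(f^*(\beta))$ of stacks carries no further information in this case, since both stacks coincide with $\StQCoh(0)$ on $X$.
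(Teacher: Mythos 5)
Correct, and this is exactly the argument the paper intends: the corollary is stated without proof as the immediate specialization of Theorem~\ref{thm:main} to $R=\ZZ$, $\alpha=0$, $\beta=0$. Your remark that any equivalence of abelian categories is automatically additive (preserving biproducts, which are determined by a universal property) and hence $\ZZ$-linear is the one detail worth making explicit, and you handled it correctly.
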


\begin{remark}
    The corollary is originally due to Rosenberg~\cite{rosenberg}, although there are some problems
    with the published exposition, as it claims to give the result for arbitrary schemes.
\end{remark}

To prove the theorem we introduce an intermediary construction, the spectrum of
$\QCoh(X,\alpha)$, following Rosenberg~\cite{rosenberg}. While our definition differs from
Rosenberg's, the main ideas of the proof of the critical Proposition~\ref{prop:bijection}
below are due to Rosenberg, with some important adjustments to take into account the
twisting.

\begin{definition}
    If $A$ is an abelian category and $M\in A$, let $\add M$ denote the full subcategory of
    $A$ consisting of all subquotients of finite direct sums of the object $M$. Call an
    abelian category $A$ quasi-local if it has a quasi-final object, which is an object
    contained in $\add M$ for all non-zero objects $M$ of $A$.
\end{definition}

\begin{definition}
    If $A$ is an abelian category, let $\Sp A$ denote the class of localizing subcategories
    $B\subseteq A$ such that $A/B$ is quasi-local. Denote by $S_B$ a quasi-final object of
    $A/B$, which we often view as an object of $A$ via the right adjoint to $A\rightarrow
    A/B$.
\end{definition}

\begin{lemma}\label{lem:commuting}
    Suppose that $j:A\rightarrow A/C$ is an exact localization and let $B$ be a localizing
    subcategory in $\Sp A$. Then, either $j(S_B)=0$ or $B/B\cap C\in\Sp A/C$.
    \begin{proof}
        We assume that $j(S_B)\neq 0$.
        Note that the localization $A\rightarrow (A/C)/(B/B\cap C)$ kills every object of
        $B$. Therefore, it factors through $A\rightarrow A/B$. Thus, the image of $S_B$ in
        $(A/C)/(B/B\cap C)$ is quasi-final.
    \end{proof}
\end{lemma}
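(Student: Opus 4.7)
The plan is to transport the quasi-final object $S_B$ of $A/B$ to a quasi-final object of $(A/C)/(B/B\cap C)$ via a universal factorization. Assume $j(S_B) \neq 0$, as in the author's dichotomy. First I would check that the composition $\pi\colon A \xrightarrow{j} A/C \to (A/C)/(B/B\cap C)$ annihilates every object of $B$: for $M \in B$, the image $j(M)$ lies in $B/B\cap C$, hence is killed by the second Serre localization. By the universal property of $A \to A/B$, $\pi$ therefore factors as $A \to A/B \xrightarrow{\bar{j}} (A/C)/(B/B\cap C)$ for a unique exact functor $\bar{j}$.

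Second, I would transfer quasi-finality along $\bar{j}$. Since Serre quotients have the same objects as $A$, any nonzero $N \in (A/C)/(B/B\cap C)$ is represented by some $M \in A$; its image $\bar{M}$ in $A/B$ must be nonzero, for otherwise $N = \bar{j}(\bar{M}) = 0$. Quasi-finality of $S_B$ in $A/B$ gives $S_B \in \add_{A/B}(\bar{M})$, and applying the exact functor $\bar{j}$ yields $\bar{j}(S_B) \in \add N$ in $(A/C)/(B/B\cap C)$. Thus the image of $S_B$ is a quasi-final object of $(A/C)/(B/B\cap C)$, establishing the quasi-local part of membership in $\Sp A/C$.

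Third, I would verify the localizing condition: that $B/B\cap C$ is a localizing subcategory of $A/C$. Because $A$ is Grothendieck abelian and $C$ is localizing, $A/C$ is again Grothendieck abelian, so Proposition~\ref{prop:localizing} applies and it suffices to see that the inclusion $B/B\cap C \hookrightarrow A/C$ is thick and preserves small colimits; both follow from the corresponding properties of $B \hookrightarrow A$ together with exactness of $j$ and its preservation of colimits. The main obstacle is essentially bookkeeping — checking that $B/B\cap C$ really is a localizing subcategory of $A/C$ (a routine consequence of $A/C$ being Grothendieck and $B$ being localizing in $A$) and confirming the case hypothesis $j(S_B) \neq 0$ guarantees the quasi-final object produced is nontrivial; once these are in place, the factorization argument is immediate.
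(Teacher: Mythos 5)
Your proposal is correct and follows essentially the same route as the paper: you factor the composite $A\rightarrow A/C\rightarrow (A/C)/(B/B\cap C)$ through $A\rightarrow A/B$ (since it kills every object of $B$) and then transfer quasi-finality of $S_B$ along the induced exact functor. The only difference is that you spell out details the paper leaves implicit, namely that every nonzero object of the double quotient is the image of an object of $A$ that is nonzero in $A/B$, so exactness carries $\add$-containments over, and the (routine) check that $B/B\cap C$ is localizing in $A/C$.
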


The first result is that there is a bijection between $\Sp\QCoh(X,\alpha)$ and $X$. In particular, it is
a set rather than a proper class. We will use implicitly throughout the equivalence of
$\QCoh(X,\alpha)$ and $\QCoh(X,\Ascr)$ when $\Ascr$ is an Azumaya algebra with class
$\alpha$.

\begin{proposition}\label{prop:bijection}
    If $X$ is quasi-compact and quasi-separated, $\alpha\in\Br(X)$, and $\Ascr$ is any
    Azumaya algebra representing $\alpha$, there are inverse
    bijections
    \begin{equation*}
        \xymatrix{
            \Sp\QCoh(X,\alpha)\ar@<.5ex>[r]^-\psi    &   X\ar@<.5ex>[l]^-\phi
        }
    \end{equation*}
    defined by
    \begin{gather*}
        \psi(B)=\text{the generic point of $\supp(S_B)$},\\
        \phi(x)=\{M\in\QCoh(X,\alpha)|\Ascr(x)\notin\add M\},
    \end{gather*}
    where $\Ascr(x)=\Ascr/p(x)$ and $p(x)$ is the kernel of
    $\Ascr\rightarrow\Ascr\otimes_{\Oscr_X}k(x)$.
    Moreover, $\phi(x)$ does not depend on the Azumaya algebra $\Ascr$.
    \begin{proof}
        Note that we compute $\supp(S_B)\subseteq X$ by viewing $S_B$ as an object of
        $\QCoh(X,\alpha)$ via the fully faithful right adjoint
        $\QCoh(X,\alpha)/B\rightarrow\QCoh(X,\alpha)$. The main work of the proof is to show
        that both $\psi$ and $\phi$ are in fact well-defined.

        To see that $\psi$ is well-defined, let $B$ be a localizing subcategory of
        $\QCoh(X,\alpha)$. To check that $\supp(S_B)$ is irreducible, we can assume that
        $X=\Spec R$ by Lemma~\ref{lem:commuting}.
        Then, it suffices to check that $\ann_R(S_B)$ is prime. Suppose that
        $a,b\in R$ are elements such that $ab\in\ann_R(S_B)$. Consider the submodules $aS_B$
        and $bS_B$ of $S_B$. Assume that $aS_B$ is non-zero. Then, $b\in\ann_R(aS_B)$. Since $S_B$
        is in $\add(aS_B)$ as $S_B$ is quasi-final, it follows that $b\in\ann_R(S_B)$. So,
        $\ann_R(S_B)$ is prime. Finally, it is clear than any two quasi-final objects in
        $\QCoh(X,\alpha)/B$ have the same support. Thus, $\psi$ is well-defined.

        The map $\phi(x)$ defines a full subcategory of $A$, but it is not
        immediately clear that it is thick much less localizing and contained in $\Sp\QCoh(X,\alpha)$.
        Let $x\in X$, and take $\Ascr(x)$ as above. Then, $\Ascr(x)$ is the quotient of $\Ascr$
        by a sheaf of two-sided prime ideals. Let $N\subseteq\Ascr(x)$ be a non-zero left
        submodule. We want to show that $\Ascr(x)\in\add N$.

        To do this, we can assume that
        $N$ is of finite type. Let $j:U\subseteq X$ be an affine open with $j$ quasi-compact and such
        that $x\in U$ and $N|_U$ is non-zero. Then, $N|_U\subseteq\Ascr(x)|_U$. The ideal
        \begin{equation*}
            \mathrm{ann}_{\Ascr(x)(U)}(N(U))=\{x\in\Ascr(x)(U)\,|\,xN(U)=0\}
        \end{equation*}
        is a $2$-sided ideal. Since $\Ascr(x)\otimes_{\Oscr_X}k(y)$ is a central simple
        algebra for $y\in\overline{\{x\}}$, it follows that
        $\mathrm{ann}_{\Ascr(x)(U)}(N(U))\otimes_{\Oscr_X}k(y)=0$ for all such $y$. Since $N$ is
        of finite type, so is $\mathrm{ann}_{\Ascr(x)(U)}(N(U))$. Nakayama's lemma now implies
        that the annihilator $\mathrm{ann}_{\Ascr(x)(U)}(N(U))$ vanishes. Pick generators
        $a_1,\ldots,a_n$ of $N(U)$. Then, the natural map
        \begin{equation*}
            \Ascr(x)(U)\rightarrow\bigoplus_{i=1}^n \Ascr(x)(U)/\mathrm{ann}_{\Ascr(x)(U)}(a_i)
        \end{equation*}
        is injective. But, each quotient $\Ascr(x)(U)/\mathrm{ann}_{\Ascr(x)(U)}(a)$ is contained in
        $\add(N(U))$ (viewed as a subcategory of the abelian category $\Mod_{\Ascr(x)(U)}$). It follows that $\Ascr(x)(U)$ is contained in $\add(N)(U)$.
        This means that $\Ascr(x)|_U\in\add N|_U$ in $\QCoh(U,\alpha)$. By adjunction,
        $j_*\Ascr(x)|_U\in\add N$.

        Now, we claim that $\Ascr(x)$ is a subsheaf of a direct sum
        \begin{equation*}
            j_{i,*}\Ascr(x)|_{U_i}
        \end{equation*}
        for $j_i:U_i\rightarrow X$ finitely many quasi-compact open immersions. This will
        show that $\Ascr(x)$ is contained in $\add N$. Write $V$ for the support of
        $\Ascr(x)$. That is $V=\overline{\{x\}}$. Then, the inclusion $i:V\rightarrow X$ is
        quasi-compact. It follows that there are finitely many open affines $U_i$ of $X$ each
        intersecting $V$ that cover $V$ and such that $j_{i}:U_i\rightarrow X$ is
        quasi-compact. The claim follows, and we have proven that $\Ascr(x)\in\add N$, as
        desired.

        Now, it follows that $\phi(x)$ is thick. Indeed, let
        \begin{equation*}
            0\rightarrow M'\rightarrow M\rightarrow M''\rightarrow 0
        \end{equation*}
        be an exact sequence. If $\Ascr(x)\notin\add M$, then it is clearly not in $\add M'$
        or in $\add M''$. Conversely, if it is not in $\add M'$ nor in $\add M''$, then it
        cannot be in $\add M$. Otherwise, $\Ascr(x)$ would be a subquotient of $M^{\oplus
        n}$ for some $n$. Either an object of $\add M'$ has a non-zero map to $\Ascr(x)$ in
        which case $\Ascr(x)\in\add M'$ by the previous paragraph, or there is no such map, in which case
        $\Ascr(x)\in\add M''$. Therefore, $\phi(x)$ is thick. If $M=\colim_i M_i$ for some
        objects $M_i\in\phi(x)$ and if $\Ascr(x)\in\add M$, then some object of $\add M_i$
        must map to $\Ascr(x)$ with non-zero image by axiom AB5. Hence, $\phi(x)$ is a
        localizing subcategory of $\QCoh(X,\alpha)$ by Proposition~\ref{prop:localizing}. To
        see that the quotient is quasi-local, note that the image of $\Ascr(x)$ in the
        quotient $\QCoh(X,\alpha)/\phi(x)$ is quasi-final by the definition of $\phi(x)$.

        Now that we have seen that the maps are well-defined, we show that they are mutual
        inverses. It is clear that $\psi(\phi(x))=x$. So, fix $B\in\Sp\QCoh(X,\alpha)$.
        Then, $\phi(\psi(B))$ consists of $M\in\QCoh(X,\alpha)$ such that
        $\Ascr(x)\notin\add M$ where $x$ is the generic point of the support of $S_B$.
        Clearly, $B\subseteq\phi(\psi(B))$. Fix a quasi-final object $S'$ of
        $\QCoh(X,\alpha)/\phi(\psi(B))$ (viewed as usual as an object of $\QCoh(X,\alpha))$.
        Let $M\in\phi(\psi(B))$ be an object not contained in $B$. Then, by definition, 
        $S'\notin\add M$, and moreover, $S_B\in\add M$. Now, by construction of $\phi$ and
        $\psi$, the image of $S_B$ in $\QCoh(X,\alpha)/\phi(\psi(x))$ is non-zero. By
        quasi-finality of $S'$, we have that $S'\in\add S_B$. But, this means that $S'\in\add
        M$, in contradiction to our choice of $M$. Therefore, $B=\phi(\psi(B))$.

        Finally, that $\phi(x)$ does not depend on the choice of Azumaya algebra
        $\Ascr$ with Brauer class $\alpha$ follows from the fact that $\phi$ is the inverse
        to $\psi$, and hence is unique.
    \end{proof}
\end{proposition}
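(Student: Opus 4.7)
The plan is to verify that both maps are well-defined, then show they are mutual inverses; the independence of $\phi(x)$ on the choice of Azumaya algebra $\Ascr$ will follow formally from uniqueness of inverses.

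For $\psi$ being well-defined, the main issue is to show that $\supp(S_B)\subseteq X$ is irreducible, so that a unique generic point exists. First I would use Lemma~\ref{lem:commuting} to reduce to the affine case $X=\Spec R$: if $j\colon A\to A/C$ is the exact localization associated to restriction to an affine open, either $j(S_B)=0$ (so the support avoids that open) or $B/(B\cap C)\in\Sp A/C$, and the support of $S_B$ restricts compatibly. In the affine case I would show that $\ann_R(S_B)$ is prime: if $ab\in\ann_R(S_B)$ and $aS_B\neq 0$, then quasi-finality gives $S_B\in\add(aS_B)$, so any element of $R$ annihilating $aS_B$ must annihilate $S_B$, forcing $b\in\ann_R(S_B)$. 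Independence of the choice of quasi-final object is clear since any two have the same support.

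For $\phi$ being well-defined, the critical technical step is proving that for every nonzero subsheaf $N\subseteq\Ascr(x)$ we have $\Ascr(x)\in\add N$. I would reduce to $N$ of finite type, then pass to a quasi-compact affine open $U$ containing $x$ on which $N|_U\neq 0$. The annihilator $\mathrm{ann}_{\Ascr(x)(U)}(N(U))$ is a two-sided ideal; since the fiber $\Ascr(x)\otimes_{\Oscr_X}k(y)$ is a central simple algebra for every $y\in\overline{\{x\}}$, this annihilator has vanishing fiber at every such $y$, and Nakayama's lemma forces it to vanish. Picking generators $a_1,\dots,a_n$ of $N(U)$ then yields an injection $\Ascr(x)(U)\hookrightarrow\bigoplus_i\Ascr(x)(U)/\mathrm{ann}(a_i)$, placing $\Ascr(x)(U)$ in $\add(N(U))$. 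Using adjunction and quasi-compactness of the closed immersion $\overline{\{x\}}\hookrightarrow X$ to patch finitely many such open affines, I conclude $\Ascr(x)\in\add N$ globally. From this, thickness of $\phi(x)$ follows by an exact sequence chase (either $\Ascr(x)\in\add M'$ via a nonzero map, or $\Ascr(x)\in\add M''$); Proposition~\ref{prop:localizing} and axiom AB5 upgrade thickness to localizing; and the quotient is quasi-local by construction.

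Finally, to show $\phi$ and $\psi$ are mutual inverses, the identity $\psi(\phi(x))=x$ is immediate from the support of $\Ascr(x)$. For $\phi(\psi(B))=B$, the inclusion $B\subseteq\phi(\psi(B))$ is clear; for the reverse, assume $M\in\phi(\psi(B))\setminus B$ and take a quasi-final object $S'$ of $\QCoh(X,\alpha)/\phi(\psi(B))$. By definition $S'\notin\add M$, while $M\notin B$ gives $S_B\in\add M$, and quasi-finality of $S'$ forces $S'\in\add S_B\subseteq\add M$, a contradiction. Independence of $\phi(x)$ on the choice of $\Ascr$ follows because $\phi$ is determined as the inverse of $\psi$, which depends only on the abelian category structure.

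The main obstacle is the localizing step for $\phi(x)$: proving that $\Ascr(x)\in\add N$ for every nonzero submodule $N\subseteq\Ascr(x)$. This is where the twisted/Azumaya structure enters essentially, via the central simplicity of the fibers of $\Ascr(x)$ combined with Nakayama, and the patching from affine opens to $X$ requires the quasi-compactness hypothesis.
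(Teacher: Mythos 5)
Your proposal is correct and follows essentially the same path as the paper: the same affine reduction via Lemma~\ref{lem:commuting} and primality of $\ann_R(S_B)$ for $\psi$, the same Nakayama-plus-central-simplicity argument with patching over finitely many quasi-compact affines to prove $\Ascr(x)\in\add N$ for $\phi$, the same AB5/Proposition~\ref{prop:localizing} upgrade to localizing, and the same contradiction argument for $\phi(\psi(B))=B$. Nothing substantive is missing or diverges from the paper's proof.
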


\begin{remark}
    Let $X$ be an arbitrary quasi-compact and quasi-separated scheme and
    $\alpha\in\Br'(X)=\Hoh^2_{\et}(X,\Gm)_{\tors}$ a cohomological Brauer class. One might
    ask whether the theorem extends to this case. At the moment, we are not certain,
    although we can
    say the following. Note that $\psi_X$ is well-defined regardless.
    Let $U\subseteq X$ be an affine open subset. Then, by a theorem of
    Gabber~\cite{gabber}*{Chapter II, Theorem 1}, the restriction of $\alpha$ to $U$ is represented by an Azumaya
    algebra. Looking at the commutative diagrams
    \begin{equation*}
        \xymatrix{
            \Sp\QCoh(U,\alpha)\ar[r]^-{\psi_U}\ar[d]     &   U\ar[d]\\
            \Sp\QCoh(X,\alpha)\ar[r]^-{\psi_X}           &   X
        }
    \end{equation*}
    for all such $U$,
    we see that $\psi_X$ is surjective. The difficulty is in constructing the
    inverse to $\psi_X$, where we needed, or at least used, an Azumaya algebra.
\end{remark}

The proposition says that the set of points of $X$ can be recovered from $\QCoh(X,\alpha)$.
We go even farther, showing that $X$ can be recovered as a ringed space from
$\QCoh(X,\alpha)$. To do this, we first introduce a topology on $\Sp\QCoh(X,\alpha)$.

\begin{definition}
    If $A$ is a Grothendieck abelian category, and if $M$ is an object of $A$, define
    $\supp_{\Sp}(M)\subseteq\Sp A$ to be the set of all $B\in\Sp A$ such that the image of
    $M$ in $A/B$ is non-zero. Give $\Sp A$ the topology generated by the basis of closed sets
    \begin{equation*}
        \{\supp_{\Sp}(M)|\text{$M\in A$ is finitely presented}\}.
    \end{equation*}
    Recall that $M$ is finitely presented if the functor $\Hom_A(M,-)$ commutes with
    arbitrary coproducts. Because finite direct sums of finitely presented objects are
    finitely presented, the set is closed under finite unions.
\end{definition}

\begin{proposition}
    The maps
    \begin{equation*}
        \xymatrix{
            \Sp\QCoh(X,\alpha)\ar@<.5ex>[r]^-\psi    &   X\ar@<.5ex>[l]^-\phi
        }
    \end{equation*}
    are inverse homeomorphisms.
    \begin{proof}
        It suffices to show that $\psi$ induces a bijection between elements of a
        basis for the topologies of $\Sp\QCoh(X,\alpha)$ and $X$. Let $V\subseteq X$ be a
        closed subset defined by a sheaf of ideals $I_V\subseteq\Oscr_X$ of finite type.
        Then, $\Oscr_V$ is a finitely presented $\Oscr_X$-module. Write $\Ascr_V$ for
        $\Ascr\otimes_{\Oscr_X}\Oscr_V$. It is a left $\Ascr$-module of finite presentation
        in $\QCoh(X,\alpha)$. It follows that
        $\supp_{\Sp}(\Oscr_V)$ as defined above is a closed subset of $\Sp\QCoh(X,\alpha)$.
        We show that $\psi(\supp_{\Sp}(\Ascr_V))=\supp(\Ascr_V)=V$. Since the closed subsets
        defined by a finitely generated sheaf of ideals $I_V$ of finite type form a basis
        for $X$, the proposition will follow. Now, if $B\in\supp_{\Sp}(\Ascr_V)$, then
        $S_B\in\add\Ascr_V$,
        where $S_B$ is a quasi-final object of the quotient
        $\QCoh(X,\alpha)/B$. It follows that $\supp(S_B)\subseteq\supp(\Ascr_V)=V$. Hence,
        $\psi(\supp_{\Sp}(\Ascr_V))\subseteq V$.
        On the other hand, if $x\in V$, then recall that $\Ascr(x)$ is precisely
        $\Ascr_{\overline{\{x\}}}$, where $\overline{\{x\}}$ is the closure of $\{x\}$.
        Since $\overline{\{x\}}$ is a closed subscheme of $V$, it follows that $\Ascr(x)$ is a quotient
        of $\Ascr_V$. That is, $\Ascr(x)\in\add\Ascr_V$. Therefore, $\phi(x)$ does not
        contain $\Ascr_V$. But, this means that $\phi(x)\in\supp_{\Sp}(\Ascr_V)$. Since
        $\phi$ and $\psi$ are inverse bijections, it follows that
        $\psi(\supp_{\Sp}(\Ascr_V))=V$.
    \end{proof}
\end{proposition}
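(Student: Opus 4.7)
The plan is to invoke the bijection established in Proposition~\ref{prop:bijection} and upgrade it to a homeomorphism by matching bases of closed sets. Since $\psi$ and $\phi$ are already mutual inverses as maps of sets, I only need to exhibit a basis for the topology of $X$ whose preimage under $\phi$ (equivalently, image under $\psi^{-1}$) is a basis for the topology on $\Sp\QCoh(X,\alpha)$.

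For the basis on $X$, I would use the closed subsets $V = V(I_V)$ cut out by finite type sheaves of ideals $I_V\subseteq\Oscr_X$; these form a basis for the closed sets of $X$ because $X$ is quasi-compact and quasi-separated. Fix an Azumaya algebra $\Ascr$ representing $\alpha$, so that $\QCoh(X,\alpha)\simeq\QCoh(X,\Ascr)$. For each such $V$, set $\Ascr_V=\Ascr\otimes_{\Oscr_X}\Oscr_V$. Because $\Oscr_V$ is a finitely presented $\Oscr_X$-module and $\Ascr$ is locally free of finite rank, $\Ascr_V$ is finitely presented in $\QCoh(X,\alpha)$, so $\supp_{\Sp}(\Ascr_V)$ is a closed set of the defining basis of $\Sp\QCoh(X,\alpha)$. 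The claim reduces to showing $\psi(\supp_{\Sp}(\Ascr_V))=V$.

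For the inclusion $\psi(\supp_{\Sp}(\Ascr_V))\subseteq V$: if $B\in\supp_{\Sp}(\Ascr_V)$, then the image of $\Ascr_V$ in $\QCoh(X,\alpha)/B$ is non-zero, so by quasi-finality of $S_B$ we have $S_B\in\add\Ascr_V$, which forces $\supp(S_B)\subseteq\supp(\Ascr_V)=V$, hence the generic point $\psi(B)$ lies in $V$. For the reverse inclusion, given $x\in V$, observe that $\overline{\{x\}}\subseteq V$, so $\Ascr(x)=\Ascr_{\overline{\{x\}}}$ is a quotient of $\Ascr_V$; in particular $\Ascr(x)\in\add\Ascr_V$, so $\Ascr_V\notin\phi(x)$, which is exactly the statement $\phi(x)\in\supp_{\Sp}(\Ascr_V)$. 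Applying $\psi$ and using $\psi\phi=\id$ gives $x\in\psi(\supp_{\Sp}(\Ascr_V))$.

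The main obstacle I anticipate is not any single deep step but rather checking that the proposed basis for $\Sp\QCoh(X,\alpha)$ consisting of $\supp_{\Sp}(\Ascr_V)$ is rich enough. In particular one needs that $\Ascr_V$ really is finitely presented in the twisted category (which requires the local freeness of $\Ascr$), and that every defining closed basis set on the $\Sp$-side arises, up to intersection, from such data; the latter is automatic once we have the bijective correspondence on a cofinal family of closed sets in $X$, since we only need to exhibit a basis, not describe every closed set. Everything else reduces to the support/quotient bookkeeping done in Proposition~\ref{prop:bijection}.
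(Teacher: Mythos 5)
Your proposal is correct and follows the paper's proof essentially line for line: reduce to checking the basis of closed sets $V$ cut out by finite-type ideal sheaves, set $\Ascr_V=\Ascr\otimes_{\Oscr_X}\Oscr_V$, and verify $\psi(\supp_{\Sp}(\Ascr_V))=V$ via exactly the two inclusions you give (quasi-finality of $S_B$ for $\subseteq$, and $\Ascr(x)$ being a quotient of $\Ascr_V$ for $\supseteq$). Your added remark that local freeness of $\Ascr$ is what makes $\Ascr_V$ finitely presented in the twisted category is a small but genuine clarification the paper leaves implicit.
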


\begin{definition}
    The canonical prestack on $\Sp A$ is the prestack $\St_A^p$ of abelian categories given by sending an
    open set $U\subseteq\Sp A$ to
    \begin{equation*}
        \St_A^p(U)=A/\cap_{B\in U}B,
    \end{equation*}
    where the intersection $\cap_{B\in U}B$ is localizing as each $B$ is localizing.
    Recall that any abelian category $A$ has a center $C(A)$, which is the commutative ring of
    endomorphisms of the identity functor of $A$.
    By taking the center of these categories, we obtain a presheaf of commutative rings
    $\Oscr_A^p$, which has $\Oscr_A^p(U)=C(\St_A(U))$. Write $\Oscr_A$ for the
    sheafification of $\Oscr_A^p$. Similarly, write $\St_A$ for the stackification of
    $\St_A^p$. Note that $\St_A$ is naturally an $\Oscr_A$-linear stack.
\end{definition}

\begin{proposition}\label{prop:quotients}
    If $U\subseteq\Sp\QCoh(X,\alpha)$ is a quasi-compact immersion, there is a natural equivalence
    \begin{equation*}
        \QCoh(X,\alpha)/\cap_{B\in U}B\we\QCoh(\psi(U),\alpha).
    \end{equation*}
    \begin{proof}
        It suffices to check that $\cap_{B\in U}B$ is equal to $\QCoh_Z(X,\alpha)$ where
        $Z=X-\psi(U)$. Indeed, since $j:\psi(U)\rightarrow X$ is quasi-compact, there is a
        pushforward $j_*:\QCoh(U,\alpha)\rightarrow\QCoh(X,\alpha)$, right adjoint to
        $j^*:\QCoh(X,\alpha)\rightarrow\QCoh(U,\alpha)$. The kernel of $j^*$ is, by definition, the
        class of objects $M$ such that $j^*(M)\iso 0$. But, these are precisely the
        $\alpha$-twisted quasi-coherent sheaves supported on $Z$.

        So, suppose that $M\in\cap_{B\in U}B$. Note from the proof of
        Proposition~\ref{prop:bijection} that $\Ascr(x)$ is a quasi-final object of
        $\QCoh(X,\alpha)/\phi(x)$. In particular, $\Ascr(x)\notin\add M$ for any
        $x\in\psi(U)$. This implies that $M$ must be supported on $Z$. On the other hand,
        clearly if $M$ is supported on $Z$, then $\Ascr(x)\notin\add M$ for $x\in\psi(U)$.
        Hence, $M\in\cap_{B\in U}B$, as desired.
    \end{proof}
\end{proposition}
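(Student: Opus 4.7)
The plan is to identify the localizing subcategory $\cap_{B\in U}B$ with $\QCoh_Z(X,\alpha)$, the full subcategory of $\alpha$-twisted quasi-coherent sheaves supported on the closed complement $Z=X\setminus\psi(U)$, and then invoke standard Serre-style localization along the quasi-compact open immersion $j:\psi(U)\hookrightarrow X$. Since $\psi$ is a homeomorphism by the preceding proposition, $\psi(U)\subseteq X$ is a quasi-compact open; because $j$ is quasi-compact and quasi-separated, the exact pullback $j^*$ admits a right adjoint $j_*$ with invertible counit $j^*j_*\to\id$, so $j^*$ exhibits $\QCoh(\psi(U),\alpha)$ as $\QCoh(X,\alpha)/\ker(j^*)$, and $\ker(j^*)$ is precisely the subcategory of objects supported on $Z$.

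For the easy inclusion $\QCoh_Z(X,\alpha)\subseteq\cap_{x\in\psi(U)}\phi(x)$, suppose $\supp(M)\subseteq Z$ and $x\in\psi(U)$. Then $M_x=0$, while $\Ascr(x)_x=\Ascr\otimes_{\Oscr_X}k(x)$ is nonzero because $\Ascr$ is Azumaya, so no subquotient of a finite direct sum of copies of $M$ can agree with $\Ascr(x)$. Thus $\Ascr(x)\notin\add M$, i.e., $M\in\phi(x)$, and taking the intersection over $x\in\psi(U)$ yields $M\in\cap_{B\in U}B$.

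The reverse inclusion is the main obstacle. Arguing by contrapositive, I would suppose $x\in\supp(M)\cap\psi(U)$ and exhibit $\Ascr(x)\in\add M$, contradicting $M\in\phi(x)$. Passing to an affine neighborhood $V\subseteq\psi(U)$ of $x$ on which $M|_V$ has nonzero stalk at $x$, a local section whose image in the fiber at $x$ is nonzero generates a cyclic $\Ascr|_V$-submodule of $M|_V$; an appropriate subquotient by the two-sided prime ideal $p(x)|_V$ yields a nonzero submodule of $\Ascr(x)|_V$ sitting inside an object of $\add(M|_V)$ in $\QCoh(V,\alpha)$. The internal argument of Proposition~\ref{prop:bijection} then gives $\Ascr(x)|_V\in\add(M|_V)$, and the same quasi-compact globalization used there, writing $\Ascr(x)\hookrightarrow\bigoplus_i j_{i,*}\Ascr(x)|_{V_i}$ for a finite quasi-compact affine cover $\{V_i\}$ of $\overline{\{x\}}$, promotes this to $\Ascr(x)\in\add M$ globally in $\QCoh(X,\alpha)$. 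The technical heart is thus the careful bookkeeping relating cyclic submodules of $M$ at $x$ to the two-sided prime $p(x)$, parallel to the Nakayama argument in Proposition~\ref{prop:bijection}. Once $\cap_{B\in U}B=\QCoh_Z(X,\alpha)$ is established, the proposition follows from the localization described in the first paragraph.
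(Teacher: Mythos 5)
Your argument follows the paper's proof essentially step for step: both reduce the proposition to the identification $\cap_{B\in U}B=\QCoh_Z(X,\alpha)$ with $Z=X\setminus\psi(U)$ and then invoke the localization $\QCoh(X,\alpha)/\ker(j^*)\we\QCoh(\psi(U),\alpha)$ along the quasi-compact open immersion $j$, checking the easy inclusion on stalks and the hard one by exhibiting $\Ascr(x)\in\add M$ whenever $x\in\supp(M)\cap\psi(U)$ --- a step the paper compresses into the single assertion that $\Ascr(x)\notin\add M$ for all $x\in\psi(U)$ forces $M$ to be supported on $Z$, and which you correctly flag as the technical heart. One small correction to your sketch: reducing the cyclic submodule $\Ascr m\subseteq M|_V$ modulo $p(x)$ produces a nonzero cyclic \emph{quotient} of $\Ascr(x)|_V$ lying in $\add(M|_V)$, not a submodule of $\Ascr(x)|_V$; this causes no harm, since that quotient is of finite type and has $x$ in its support, hence has full support $\overline{\{x\}}\cap V$ and nonzero fibers along it, so the two-sided annihilator and Nakayama argument from the proof of Proposition~\ref{prop:bijection} applies to it directly and still yields $\Ascr(x)|_V\in\add(M|_V)$, after which your globalization step proceeds as stated.
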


\begin{corollary}
    The map $\phi:X\rightarrow\Sp\QCoh(X,\alpha)$ induces an equivalence
    \begin{equation*}
        \StQCoh(\alpha)\we\phi^*\St_{\QCoh(X,\alpha)}
    \end{equation*}
    of Zariski stacks of $\Oscr_X$-linear abelian categories.
    \begin{proof}
        This follows from the proposition as the open subschemes with
        quasi-compact inclusion morphism
        $U\subseteq X$ form a basis for the topology on $X$, as $X$ is quasi-compact and
        quasi-separated.
    \end{proof}
\end{corollary}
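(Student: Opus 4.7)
The plan is to upgrade the pointwise equivalence of Proposition~\ref{prop:quotients} to an equivalence of stacks, using the homeomorphism $\phi: X \riso \Sp\QCoh(X,\alpha)$ from the preceding proposition. Since $X$ is quasi-compact and quasi-separated, the open subsets $U \subseteq X$ with quasi-compact inclusion form a basis for the Zariski topology on $X$; under $\phi$, these correspond to a basis of opens of $\Sp\QCoh(X,\alpha)$.

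First, I would evaluate both prestacks on this basis. By construction, $(\phi^*\St^p_{\QCoh(X,\alpha)})(U) = \QCoh(X,\alpha)/\cap_{B \in \phi(U)} B$, which Proposition~\ref{prop:quotients} identifies canonically with $\QCoh(U,\alpha) = \StQCoh(\alpha)(U)$. Both sides are exact localizations of the common ambient category $\QCoh(X,\alpha)$ by localizing subcategories that are monotonic in $U$, so the restriction morphisms on both sides agree under these identifications, and the pointwise equivalences assemble into an equivalence of prestacks on the basis. Since $\StQCoh(\alpha)$ is already a stack in the Zariski topology and since an equivalence of prestacks on a basis induces an equivalence of associated stacks by the universal property of stackification, this yields the desired equivalence $\StQCoh(\alpha) \we \phi^*\St_{\QCoh(X,\alpha)}$ of Zariski stacks of abelian categories.

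Finally, I would verify the $\Oscr_X$-linear structure by computing centers on the basis. For each such $U$ with $\alpha|_U$ represented by an Azumaya algebra $\Ascr|_U$, the equivalence $\QCoh(U,\alpha) \we \QCoh(U,\Ascr|_U)$ together with the fact that the center of an Azumaya algebra $\Ascr|_U$ over $\Oscr_U$ is $\Oscr_U$ itself yields a canonical identification $C(\QCoh(U,\alpha)) \iso \Gamma(U,\Oscr_U)$. Sheafifying, one obtains $\phi^*\Oscr_{\QCoh(X,\alpha)} \iso \Oscr_X$, under which the equivalence of stacks is $\Oscr_X$-linear. The main obstacle is verifying compatibility of the isomorphisms from Proposition~\ref{prop:quotients} with restriction along inclusions $U \subseteq V$ in the basis, but this reduces to the observation that on both sides the restriction is the further Serre quotient of the same ambient category $\QCoh(X,\alpha)$ by a larger localizing subcategory, so no further argument is needed.
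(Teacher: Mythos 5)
Your proposal is correct and takes essentially the same approach the paper intends: use the pointwise identification $\QCoh(X,\alpha)/\cap_{B\in\phi(U)}B\we\QCoh(U,\alpha)$ from Proposition~\ref{prop:quotients} on the basis of quasi-compact opens, observe that the restriction maps on both sides are the further Serre localizations of the common ambient $\QCoh(X,\alpha)$ so the equivalences assemble, and then sheafify/stackify, with $\Oscr_X$-linearity coming from the center computation $C(\QCoh(U,\alpha))\iso\Gamma(U,\Oscr_U)$. You have simply spelled out the details the paper's one-sentence proof leaves implicit.
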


Given this equivalence of stacks and the construction of the sheaf of rings on the spectrum
of an abelian category, we obtain the following corollary, which says that $X$ can be
reconstructed from $\QCoh(X,\alpha)$.

\begin{corollary}
    The maps
    \begin{equation*}
        \xymatrix{
            \Sp\QCoh(X,\alpha)\ar@<.5ex>[r]^-\psi    &   X\ar@<.5ex>[l]^-\phi
        }
    \end{equation*}
    are inverse isomorphisms of locally ringed spaces over $R$. In particular,
    $\Sp\QCoh(X,\alpha)$ is a scheme.
    \begin{proof}
        This follows from the construction of the sheaf of rings on $\Sp\QCoh(X,\alpha)$ as
        well as the fact that when $X=\Spec S$ is affine the center of $\QCoh(X,\alpha)$ is
        isomorphic to $S$.
    \end{proof}
\end{corollary}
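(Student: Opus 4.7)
The plan is to leverage the preceding corollary, which identifies $\StQCoh(\alpha)$ with $\phi^*\St_{\QCoh(X,\alpha)}$ as Zariski stacks of $\Oscr_X$-linear abelian categories, together with a direct computation of the center in the affine case, so as to match $\Oscr_{\Sp\QCoh(X,\alpha)}$ with $\Oscr_X$ under the homeomorphism $\psi$.

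First, since $X$ is quasi-compact and quasi-separated, the affine opens $U\subseteq X$ with quasi-compact inclusion form a basis for the topology on $X$; via $\phi$ these correspond bijectively to a basis of $\Sp\QCoh(X,\alpha)$. By Proposition~\ref{prop:quotients}, $\St_{\QCoh(X,\alpha)}^p(\phi(U))\simeq\QCoh(U,\alpha|_U)$ for each such $U$, so the presheaf of rings $\Oscr_{\QCoh(X,\alpha)}^p$ on this basis coincides, through $\psi$, with the presheaf $U\mapsto C(\QCoh(U,\alpha|_U))$.

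Second, I would compute this center affine-locally. If $U=\Spec S$ and $\Ascr_U$ is an Azumaya $S$-algebra representing $\alpha|_U$, then $\QCoh(U,\alpha|_U)\we\Mod_{\Ascr_U(U)}$, and standard Morita theory identifies the center of the category of left modules over an algebra with the center of the algebra. Since $\Ascr_U$ is Azumaya, its center is $S=\Gamma(U,\Oscr_X)$, which yields a natural $R$-linear isomorphism $C(\QCoh(U,\alpha|_U))\cong\Gamma(U,\Oscr_X)$. These isomorphisms are compatible with restrictions along inclusions of affine opens because both sides are obtained by applying $C$ to the restriction functor $\QCoh(U,\alpha|_U)\to\QCoh(V,\alpha|_V)$, and Morita invariance makes this restriction correspond to the ring map $\Gamma(U,\Oscr_X)\to\Gamma(V,\Oscr_X)$.

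Sheafifying, one obtains an isomorphism $\psi^{-1}\Oscr_X\we\Oscr_{\QCoh(X,\alpha)}$, which together with the homeomorphism $\psi$ upgrades $\psi$ and $\phi$ to mutually inverse morphisms of ringed spaces over $R$. The resulting ringed space is locally ringed (and in fact isomorphic to $X$ as a scheme) because, passing to stalks at $x\in X$, the isomorphism becomes the identification with the local ring $\Oscr_{X,x}$ inherited from the affine case. The main subtlety is checking that the Morita-theoretic identification of the center is independent of the Azumaya representative $\Ascr$ and natural in $U$; both points follow from the intrinsic characterization of $C(\QCoh(U,\alpha|_U))$ as the endomorphism ring of the identity functor, together with the uniqueness of $\phi$ already recorded in Proposition~\ref{prop:bijection}.
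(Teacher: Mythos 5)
Your argument is correct and follows the same route the paper gestures at: compute $\Oscr_{\QCoh(X,\alpha)}$ on a basis of affine opens via Proposition~\ref{prop:quotients}, use Morita theory to identify the center of $\QCoh(\Spec S,\alpha)\we\Mod_{\Ascr(S)}$ with the center $S$ of the Azumaya algebra, and sheafify. You have merely supplied the naturality and stalk-level checks that the paper leaves to the reader.
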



\begin{proof}[Proof of Theorem~\ref{thm:main}]
    Suppose now that $X$ and $Y$ are quasi-compact and quasi-separated schemes with
    $\alpha\in\Br(X)$ and $\beta\in\Br(Y)$, and suppose that there is an equivalence
    $F:\QCoh(X,\alpha)\rwe\QCoh(Y,\beta)$ of $R$-linear abelian categories. Then,
    $F$ induces a unique isomorphism of $R$-schemes
    \begin{equation*}
        \Sp F:\Sp\QCoh(X,\alpha)\riso\Sp\QCoh(Y,\beta)
    \end{equation*}
    with the property that $(\Sp F)(B)$ is the localizing subcategory $F(B)$ of $\QCoh(Y,\beta)$.
    Indeed, $F$ defines a bijection between the sets of localizing subcategories of
    $\QCoh(X,\alpha)$ and $\QCoh(Y,\beta)$, which then respects the topology on their
    spectra, since it respects supports of objects. This proves the uniqueness part of the
    theorem as well. Finally, $\Sp F$ induces
    an equivalence between prestacks of abelian categories, and hence on the structure
    sheaves. In particular, there is an equivalence
    \begin{equation*}
        \St_{\QCoh(X,\alpha)}\we(\Sp F)^*\St_{\QCoh(Y,\beta)}
    \end{equation*}
    of stacks of $\Oscr_{\QCoh(X,\alpha)}$-abelian categories on $\Sp\QCoh(X,\alpha)$.

    Write $\phi_X$ and $\psi_X$ for the maps of Proposition~\ref{prop:bijection} on $X$, and
    similarly for $\phi_Y$ and $\psi_Y$. Proposition~\ref{prop:quotients} implies that there is an
    equivalence
    \begin{equation*}
        \St_{\QCoh(Y,\beta)}\we\psi_Y^*\StQCoh(\beta)
    \end{equation*}
    of stacks of $\Oscr_{\QCoh(Y,\beta)}$-linear abelian categories on $\Sp\QCoh(Y,\beta)$,
    where we view $\StQCoh(\beta)$ as a Zariski stack on $Y$. Similarly, $\phi_X$ induces an
    equivalence
    \begin{equation*}
        \StQCoh(\alpha)\we\phi_X^*\St_{\QCoh(X,\alpha)}
    \end{equation*}
    of stacks of $\Oscr_X$-linear abelian categories on $X$.

    Letting $f=\psi_Y\circ(\Sp F)\circ\phi_X$,
    we obtain an equivalence
    $\StQCoh(\alpha)\we f^*\StQCoh(\beta)$. If $U\subseteq X$ is an open subscheme, then
    $(f^*\StQCoh(\beta))(U)=\StQCoh(f(U),\beta)$. Thus, by definition,
    $f^*\StQCoh(\beta)\we\StQCoh(f^*(\beta))$. The theorem is proved.
\end{proof}

\section{Noetherian reconstruction}

We remark that the following theorem holds, where the assumption that $\alpha\in\Br(X)$ is
dropped. In the statements below, $\Coh(X,\alpha)$ is the abelian category of
$\alpha$-twisted coherent $\Oscr_X$-modules.

\begin{theorem}\label{thm:maincoherent}
    Suppose that $X$ and $Y$ are noetherian schemes over a
    commutative ring $R$ with cohomological Brauer
    classes $\alpha\in\Br'(X)$ and $\beta\in\Br'(Y)$. If there is an equivalence of $R$-linear abelian
    categories $F:\Coh(X,\alpha)\rwe\Coh(Y,\beta)$, then
    there exists a unique isomorphism $f:X\rightarrow Y$ of $R$-schemes compatible with
    $F$ via supports. Moreover, $f$ induces an equivalence of stacks of
    $\Oscr_X$-linear abelian categories $\StCoh(\alpha)\we\StCoh(f^*(\beta))$ on $X$.
\end{theorem}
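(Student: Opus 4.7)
My plan is to reduce Theorem~\ref{thm:maincoherent} to the quasi-coherent argument of Theorem~\ref{thm:main} via Ind-completion, with Perego's reconstruction theorem supplying what was previously supplied by the existence of a global Azumaya representative for $\alpha$. Since $X$ and $Y$ are noetherian, the quasi-coherent categories $\QCoh(X,\alpha)$ and $\QCoh(Y,\beta)$ are the Ind-completions of the corresponding coherent categories, so the $R$-linear equivalence $F: \Coh(X,\alpha) \rwe \Coh(Y,\beta)$ extends canonically to an $R$-linear equivalence of Grothendieck abelian categories $\Ind F: \QCoh(X,\alpha) \rwe \QCoh(Y,\beta)$ that preserves finitely presented objects. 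This reduces the problem to running the argument of Section~4 with $\alpha \in \Br'(X)$ and $\beta \in \Br'(Y)$ in place of honest Brauer classes.

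Next I would extend the reconstruction of Section~4 to cohomological Brauer classes on noetherian schemes. The remark after Proposition~\ref{prop:bijection} already shows that the map $\psi_X: \Sp\QCoh(X,\alpha) \rightarrow X$ is well-defined and surjective: by Gabber's theorem, $\alpha$ is represented by an Azumaya algebra on each affine open $U \subseteq X$, and the local spectra assemble compatibly. The missing ingredient is a canonical inverse $\phi_X$ that is independent of the local Azumaya choices, and this is what Perego's reconstruction theorem provides in the noetherian setting: it yields an intrinsic description of the localizing subcategory of $\QCoh(X,\alpha)$ attached to a point of $X$, giving a two-sided inverse to $\psi_X$ and, by the same support arguments, the homeomorphism. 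Proposition~\ref{prop:quotients} and its corollary then carry over verbatim, and both the scheme $X$ and the stack $\StQCoh(\alpha)$ can be reconstructed from $\QCoh(X,\alpha)$ for any $\alpha \in \Br'(X)$.

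With these tools in place, the proof of Theorem~\ref{thm:maincoherent} is formally identical to that of Theorem~\ref{thm:main}: the extended equivalence $\Ind F$ induces an isomorphism $\Sp(\Ind F): \Sp\QCoh(X,\alpha) \riso \Sp\QCoh(Y,\beta)$ of locally ringed $R$-spaces, composing with the reconstructed isomorphisms produces the desired $f: X \to Y$ of $R$-schemes compatible with $F$ via supports, and the canonical stack identification gives the equivalence $\StQCoh(\alpha) \simeq \StQCoh(f^*\beta)$; restricting to the full subcategories of coherent objects then yields $\StCoh(\alpha) \simeq \StCoh(f^*\beta)$. The main obstacle, and where most of the technical work goes, is the appeal to Perego: one must verify that his reconstruction is sensitive enough to the twist to produce a $\phi_X$ that is canonical across the various local Azumaya representatives and compatible with restriction along quasi-compact open immersions, so that the resulting locally ringed space and stack are genuinely global objects rather than mere gluings.
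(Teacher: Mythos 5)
Your proposal reverses the logical flow of the paper, and this creates a real gap. The paper proves Theorem~\ref{thm:maincoherent} directly from Perego's construction, which reconstructs $X$ from $\Coh(X,\alpha)$ by classifying the Serre subcategories of the small abelian category $\Coh(X,\alpha)$, topologizing the resulting set via the geometry of localizations, and using centers to recover the structure sheaf. The stack equivalence then comes for free from Perego's functorial identification of quotient categories with coherent twisted sheaves on opens. Only afterwards, in Corollary~\ref{cor:maincoherent}, does the paper pass to $\QCoh$ by Ind-completion. You instead Ind-complete \emph{first} and then try to run the quasi-coherent machinery of Section 4, using Perego only as a patch for the missing $\phi_X$. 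This is the wrong direction, for two reasons.

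First, Proposition~\ref{prop:bijection} and Proposition~\ref{prop:quotients} both rely essentially on having a global Azumaya algebra $\Ascr$ representing $\alpha$: the objects $\Ascr(x)$ are built from it, and the verification that $\phi(x)$ is localizing and that $\cap_{B\in U} B = \QCoh_Z(X,\alpha)$ uses $\Ascr$-module arithmetic. For $\alpha\in\Br'(X)\setminus\Br(X)$ no such global $\Ascr$ exists, and the remark following Proposition~\ref{prop:bijection} is explicit that the authors do not know how to construct $\phi_X$ in this generality. Saying that the propositions ``carry over verbatim'' is therefore not justified. Second, the step where you invoke Perego to supply ``an intrinsic description of the localizing subcategory of $\QCoh(X,\alpha)$ attached to a point'' is not what Perego's theorem provides: his reconstruction classifies \emph{Serre} subcategories of the \emph{coherent} category $\Coh(X,\alpha)$, not localizing subcategories of the big category. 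There is a dictionary between the two in the noetherian case, but it has to be set up and is not a formality; and once you have done that work, you have in effect reproved Perego's theorem in the coherent setting, making the entire detour through $\QCoh$ superfluous. The direct argument is to apply Perego to $\Coh(X,\alpha)$ and $\Coh(Y,\beta)$ immediately (which is exactly what the paper does), and to obtain the quasi-coherent statement afterwards by Ind-completion if one wants it.
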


The theorem follows immediately from the constructions of Perego~\cite{perego}. Perego
reconstructs $X$ from $\Coh(X,\alpha)$ by classifying the Serre subcategories of
$\Coh(X,\alpha)$, topologizing the resulting set using the geometry of localizations in
$\Coh(X,\alpha)$, and using centers of abelian categories to obtain a sheaf of rings on the
result.

As a consequence, we have the following corollary.

\begin{corollary}\label{cor:maincoherent}
    Suppose that $X$ and $Y$ are noetherian schemes over a
    commutative ring $R$ with cohomological Brauer
    classes $\alpha\in\Br'(X)$ and $\beta\in\Br'(Y)$. If there is an equivalence of $R$-linear abelian
    categories $F:\Coh(X,\alpha)\rwe\Coh(Y,\beta)$, then
    there exists a unique isomorphism $f:X\rightarrow Y$ of $R$-schemes compatible with $F$
    via supports. Moreover, $f$ induces an equivalence of stacks of
    $\Oscr_X$-linear abelian categories $\StQCoh(\alpha)\we\StQCoh(f^*(\beta))$ on $X$.
    \begin{proof}
        This follows from the theorem, since the ind-completion of $\Coh(X,\alpha)$ is
        precisely $\QCoh(X,\alpha)$. Recall that the ind-completion of $\Coh(X,\alpha)$
        is the abelian category of exact functors
        $\Fun^{\ex}(\Coh(X,\alpha)^{\op},\Mod_{\ZZ})$. As ind-completion is purely categorical, the
        equivalence at the level of stacks of coherent twisted sheaves induces an equivalence of
        stacks of quasi-coherent twisted sheaves, by stackifying the pointwise
        ind-completion of $\StCoh(X,\alpha)$ and $\StCoh(Y,\beta)$.
    \end{proof}
\end{corollary}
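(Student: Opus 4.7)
The plan is to reduce the corollary directly to Theorem~\ref{thm:maincoherent} by exploiting the fact that, on a noetherian scheme, the abelian category of quasi-coherent twisted sheaves is the ind-completion of the abelian category of coherent twisted sheaves. First I would invoke Theorem~\ref{thm:maincoherent} on the given equivalence $F:\Coh(X,\alpha)\rwe\Coh(Y,\beta)$ to obtain the unique isomorphism $f:X\rightarrow Y$ of $R$-schemes compatible with $F$ via supports, together with the induced equivalence of Zariski stacks of $\Oscr_X$-linear abelian categories $\StCoh(\alpha)\we\StCoh(f^*(\beta))$ on $X$. This already provides the isomorphism $f$ and its uniqueness; what remains is to upgrade the equivalence from coherent stacks to quasi-coherent stacks.

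Next I would use the identification $\QCoh(U,\alpha)\we\Ind(\Coh(U,\alpha))$ for every open $U\subseteq X$, valid because $U$ is noetherian and the ind-completion can be intrinsically described as the category of exact functors $\Fun^{\ex}(\Coh(U,\alpha)^{\op},\Mod_\ZZ)$. Since ind-completion is a purely categorical construction, any $R$-linear exact equivalence $\Coh(U,\alpha)\we\Coh(f(U),\beta)$ functorially extends to an $R$-linear equivalence of ind-completions, and this extension is compatible with the $R$-linear restriction functors (which preserve coherent objects because $f$ is an isomorphism, in particular of finite type). Consequently, pointwise ind-completion of the equivalence of prestacks $\StCoh(\alpha)\we\StCoh(f^*(\beta))$ produces an equivalence of prestacks whose sections on quasi-compact opens are $\QCoh(U,\alpha)$ and $\QCoh(f(U),\beta)$, respectively.

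Finally, I would stackify both sides: since stackification of the pointwise ind-completion recovers $\StQCoh(\alpha)$ and $\StQCoh(f^*(\beta))$ respectively (as these are already Zariski stacks of Grothendieck abelian categories whose values on quasi-compact opens are the indicated ind-completions), the equivalence descends to the claimed equivalence $\StQCoh(\alpha)\we\StQCoh(f^*(\beta))$ of stacks of $\Oscr_X$-linear abelian categories on $X$.

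The main obstacle, such as it is, lies in verifying that the pointwise ind-completion of the equivalence of coherent stacks is genuinely compatible with restriction and stackification, rather than in any new geometric input. Because ind-completion is characterized by a universal property in terms of filtered colimits, and because restriction of quasi-coherent sheaves along an open immersion of noetherian schemes commutes with filtered colimits and preserves coherence (when the immersion is quasi-compact, which is automatic here), these compatibilities hold formally, and the corollary follows.
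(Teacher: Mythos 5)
Your proposal is correct and follows essentially the same route as the paper: apply Theorem~\ref{thm:maincoherent} to obtain $f$ and the equivalence of coherent stacks, then use that $\QCoh(U,\alpha)$ is the ind-completion of $\Coh(U,\alpha)$ (a purely categorical construction compatible with restriction) and stackify the pointwise ind-completion to obtain $\StQCoh(\alpha)\we\StQCoh(f^*(\beta))$. Your write-up merely makes explicit the compatibility checks that the paper leaves implicit.
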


\section{\caldararu's conjecture}

\caldararu's conjecture now follows from the following theorem, which we prove using results
of To\"en~\cite{toen-derived}, and hence using derived algebraic geometry.

\begin{theorem}
    Suppose that $X$ is a quasi-compact and quasi-separated scheme. Suppose that $\alpha$
    and $\beta$ are in $\Hoh^2_{\et}(X,\Gm)$, and suppose that there is an equivalence
    \begin{equation*}
        \StQCoh(\alpha)\we\StQCoh(\beta)
    \end{equation*}
    of Zariski stacks of $\Oscr_X$-linear abelian categories. Then, $\alpha=\beta$.
    \begin{proof}
        The basic idea of the proof is that we use the hypothesized equivalence of Zariski stacks of
        abelian categories to induce an equivalence
        of \'etale stacks of locally presentable dg categories on $X$. The conclusion will then
        follow from~\cite{toen-derived}*{Corollary 3.12}. For background on stacks of dg
        categories, see~\cite{toen-derived}*{Definition 3.6}.

        The stack $\StQCoh(\alpha)$ is a stack on $\Aff_X^{\Zar}$, the category of Zariski
        open immersions $\Spec S\rightarrow X$ of affine schemes. Below, we will use
        implicitly the ideas of Grothendieck~\cite{sga1}*{Expos\'e VI} and the equivalence
        between the $2$-category of fibered
        categories over $\Aff_X^\Zar$ and the $2$-category of pseudo-functors from $\Aff_X^\Zar$ to the $2$-category of categories.
        Hence, we assume that $\StQCoh(\alpha)$ has been straightened, so that the pullback map
        \begin{equation*}
            f^*:\QCoh(\Spec S,\alpha)\rightarrow\QCoh(\Spec T,\alpha)
        \end{equation*}
        is induced by the tensor product $M\mapsto T\otimes_S M$ for any map $\Spec
        T\xrightarrow{f}\Spec S$ of Zariski open immersions. Given a composition $\Spec
        U\xrightarrow{g}\Spec T\xrightarrow{f}\Spec S$, there is a canonical isomorphism
        \begin{equation*}
            \sigma_{g,f}:U\otimes_T(T\otimes_S M)\riso (U\otimes_S M)
        \end{equation*}
        induced by $u\otimes t\otimes m\mapsto ut\otimes m$. The canonical isomorphism
        $\sigma_{g,f}$ defines a natural isomorphism of functors $\sigma_{g,f}:g^*\circ
        f^*\Rightarrow(g\circ f)^*$. There are uniquely determined higher coherences for
        compositions of the pullback maps $f^*$. For instance, if there is a third map $h:\Spec V\rightarrow \Spec U$,
        then the diagram
        \begin{equation*}
            \xymatrix{
                V\otimes_U(U\otimes_T(T\otimes_S M))\ar[r]_{\sigma_{h,g}(T\otimes_S M)}\ar[d]^{V\otimes_U\sigma_{g,f}}    & (V\otimes_T(T\otimes_S
                M))\ar[d]^{\sigma_{h\circ g,f}}\\
                V\otimes_U(U\otimes_S M))\ar[r]_{\sigma_{h,g\circ f}}              &   (V\otimes_SM)
            }
        \end{equation*}
        commutes automatically. To be precise, we see that the pseudo-functor
        $\Aff_X^{\Zar,\op}\rightarrow\Cat$ extends uniquely to a functor from
        $\Nrm(\Aff_X^{\Zar,\op})$ to the nerve of the $(2,1)$-category $\Cat_{(2,1)}$ consisting of
        categories, functors, and natural isomorphisms.

        Using again the equivalence between fibered categories and pseudo-functors, we can
        assume that $\StQCoh(\alpha)\we\StQCoh(\beta)$ is an equivalence of pseudo-functors.
        Hence, it extends uniquely to an equivalence of functors
        $\Nrm(\Aff_X^{\Zar,\op})\rightarrow\Nrm(\Cat_{(2,1)})$. We take this care only to
        ensure that below, when we create prestacks of dg categories, we get strict
        prestacks where the higher coherence data is induced directly from the coherence
        data for $\StQCoh(\alpha)$ and $\StQCoh(\beta)$.

        Let $\Spec S\rightarrow X$ be a Zariski open immersion, and
        let $\Ch_{dg}^{\sperf}(\QCoh(\Spec S,\alpha))$ denote the dg category of bounded chain
        complexes of finitely presented projective objects of the abelian category
        $\QCoh(\Spec S,\alpha)$; recall that an object $M$
        of $\QCoh(\Spec S,\alpha)$ is finitely presented if
        $\Hom_{\QCoh(\Spec S,\alpha)}(M,-)$ commutes with all small coproducts. Here
        $\sperf$ refers to the fact that these are strictly perfect complexes.

        The homotopy category $\Ho\left(\Ch_{dg}^{\sperf}(\QCoh(\Spec S,\alpha))\right)$ is equivalent to the
        triangulated category of perfect complexes of $\alpha$-twisted $S$-modules, as every
        such perfect complex is quasi-isomorphic to a strictly perfect complex, that is a
        bounded complex of finitely presented projective objects.

        The pullback maps $f^*$ in the stacks $\StQCoh(\alpha)$ preserve finitely presented
        projective objects, so they induce dg functors
        \begin{equation*}
            f^*:\Ch_{dg}^{\sperf}(\QCoh(\Spec S,\alpha))\rightarrow\Ch_{dg}^{\sperf}(\QCoh(\Spec T,\alpha)).
        \end{equation*}
        The choice of higher coherences for the stack $\StQCoh(\alpha)$ as above yields the
        following information: (1) for each Zariski open immersion $\Spec S\rightarrow X$ a
        small $S$-linear dg category $\Ch_{dg}^{\sperf}(\QCoh(\Spec S,\alpha))$; (2) for
        each map $f:\Spec T\rightarrow\Spec S$ of Zariski open immersions in $X$ a dg functor
        \begin{equation*}
            f^*:\Ch_{dg}^{\sperf}(\QCoh(\Spec S,\alpha))\rightarrow\Ch_{dg}^{\sperf}(\QCoh(\Spec T,\alpha))
        \end{equation*}
        of small $S$-linear dg categories; (3) strict higher coherences encoding the
        associativity of composition, induced by those for $\StQCoh(\alpha)$. 

        We write $\StCh_{dg}^{\sperf}(\alpha)$ for the resulting prestack of small dg categories on
        $\Aff_X^{\Zar}$. The equivalence $\StQCoh(\alpha)\we\StQCoh(\beta)$ and its
        compatibility with the choice of higher coherence data for composition yields an
        equivalence $\StCh_{dg}^{\sperf}(\alpha)\we\StCh_{dg}^{\sperf}(\beta)$.

        Now, we can take right modules over these stacks to obtain a stack of big dg categories as follows.
        Recall that given a small $R$-linear dg category $\Crm_{dg}$, the dg category of
        right $\Crm_{dg}$-modules is the dg functor category $\Mod_{\Crm_{dg}}$ of maps
        $\Crm_{dg}^{\op}\rightarrow\Ch_{dg}(R)$, where $\Ch_{dg}(R)$ is the dg category of
        all chain complexes of $R$-modules. We further let $\Mod_{\Crm_{dg}}^c$ denote the
        full sub-dg category of cofibrant right $\Crm_{dg}$-modules, where we use the
        projective Quillen model category structure on $\Mod_{\Crm_{dg}}$.
        To any dg functor
        $\Crm_{dg}\rightarrow\Drm_{dg}$, we obtain a map
        $\Mod_{\Drm_{dg}}\rightarrow\Mod_{\Crm_{dg}}$, which preserves fibrations and
        quasi-isomorphisms. It is the right-adjoint of a quillen pair, so the left adjoint
        $\Mod_{\Crm_{dg}}\rightarrow\Mod_{\Drm_{dg}}$ preserves cofibrant objects. Moreover,
        since every representable module is cofibrant, the induced dg functor
        $\Mod_{\Crm_{dg}}^c\rightarrow\Mod_{\Drm_{dg}}^c$ is
        compatible with the
        Yoneda embeddings $\Crm_{dg}\rightarrow\Mod_{\Crm_{dg}}^c$ and
        $\Drm_{dg}\rightarrow\Mod_{\Drm_{dg}}^c$. Note that the homotopy category of
        $\Mod_{\Crm_{dg}}^c$ is, by definition, the derived category of $\Crm_{dg}$. When
        $\Crm_{dg}=\Ch_{dg}(R)$ for example, this construction coincides with the usual
        derived category of $R$.

        Hence, set $\Drm_{dg}^{\Zar}(\Spec S,\alpha)=\Mod_{\Ch_{dg}^{\sperf}(\Spec S,\alpha)}^c$.
        Thanks to the compatibilities discussed above,
        we obtain in this way a stack of locally presentable dg categories
        $\Dscr_{dg}^\Zar(\alpha)$, i.e. a prestack
        satisfying~\cite{toen-derived}*{Definition 3.6}. A couple words about this are in
        order: first the fact that it takes values in locally presentable dg categories
        follows from the fact that $\Mod_{\Ch_{dg}^{\sperf}(\Spec S,\alpha)}$ is compactly
        generated, say by the representable objects, and that the hom sets of the homotopy
        category are sets. Second, the fact that it is a stack
        follows for instance by identifying $\Drm_{dg}^\Zar(\alpha)$ with the restriction of
        the \'etale stack To\"en writes as $L_\alpha$ to $\Aff_X^{\Zar}$.
        Indeed, in To\"en's paper, for $\alpha\in\Br(X)$, there is a uniquely
        determined \'etale stack of locally presentable dg categories on $X$, denoted by
        $L_\alpha$; it is precisely the stack of dg categories of complexes of
        $\alpha$-twisted sheaves with quasi-coherent cohomology.
        The strict Zariski stack of small dg
        categories $\StCh_{dg}^\sperf(\alpha)$ is a model for the stack of compact objects
        in the restriction of $L_\alpha$ to $\Aff_X^{\Zar}$, and hence
        $\Drm_{dg}^\Zar(\alpha)$ is a model for the restriction of $L_\alpha$ to $\Aff_X^\Zar$.

        Our equivalence
        $\StCh_{dg}^{\sperf}(\alpha)\we\StCh_{dg}^{\sperf}(\beta)$ yields an equivalence
        $\Dscr_{dg}^{\Zar}(\alpha)\we\Dscr_{dg}^{\Zar}(\beta)$. It now remains to get an
        equivalence of \'etale stacks out of this equivalence of Zariski stacks.
        But, for any \'etale map $g:\Spec T\rightarrow X$, we can define a big $T$-linear dg
        category by
        \begin{equation*}
            \Drm_{dg}(\Spec T,\alpha)=\mathrm{holim}_{\Spec
            S\rightarrow\im(g)}\Drm_{dg}(\Spec S,\alpha)\widehat{\otimes}_S T,
        \end{equation*}
        where the homotopy limit is over all Zariski open immersions into $X$ factoring
        through the (open) image of $g$, and where the tensor product is defined as the
        dg category of cofibrant right modules over the small dg category $\Ch_{dg}(\Spec S,\alpha)\otimes_S T$.
        This construction is functorial in \'etale maps,
        and hence leads to an \'etale stack $\Dscr_{dg}^{\et}(\alpha)$. Moreover, the
        equivalence $\Dscr_{dg}^\Zar(\alpha)\we\Dscr_{dg}^\Zar(\beta)$ induces equivalences
        between the homotopy limit diagrams, and hence an equivalence
        $\Dscr_{dg}^\et(\alpha)\we\Dscr_{dg}^\et(\beta)$.

        By the adjunction between Zariski stacks and \'etale stacks, it
        follows that $\Dscr_{dg}^\et(\alpha)\we L_\alpha$ and $\Dscr_{dg}^\et(\beta)\we
        L_\beta$. Since
        $\Dscr_{dg}^{\et}(\alpha)\we\Dscr_{dg}^{\et}(\beta)$, it follows that there is an equivalence of \'etale
        stacks $L_\alpha\we L_\beta$. By the uniqueness of $L_\alpha$
        from~\cite{toen-derived}*{Corollary 3.12}, it follows that $\alpha=\beta$.
    \end{proof}
\end{theorem}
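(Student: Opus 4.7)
The plan is to convert the given equivalence of Zariski stacks of abelian categories into an equivalence of \'etale stacks of locally presentable dg categories, and then invoke Toën's classification of such stacks by the derived Brauer group, since over each affine open the derived Brauer class agrees with the class of $\alpha$ in $\Hoh^2_{\et}(-,\Gm)$.

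First, I would rigidify the data. Using Grothendieck's equivalence between fibered categories and pseudo-functors, I would assume $\StQCoh(\alpha)$ and $\StQCoh(\beta)$ are given as strict pseudo-functors $\Aff_X^{\Zar,\op}\to\Cat_{(2,1)}$ with explicit associators, and that the hypothesized equivalence is a strict equivalence of such pseudo-functors. This is purely bookkeeping but it is needed so that the subsequent passage to dg categories preserves the strict coherence data one needs to call the result a prestack.

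Next, I would build a Zariski prestack of small dg categories by setting $\StCh_{dg}^{\sperf}(\alpha)(\Spec S)=\Ch_{dg}^{\sperf}(\QCoh(\Spec S,\alpha))$, the dg category of bounded complexes of finitely presented projective $\alpha$-twisted $S$-modules. Since pullbacks $f^{*}$ in $\StQCoh(\alpha)$ preserve finitely presented projectives (being tensor products by flat algebras), they induce strict dg functors between these small dg categories, and the strict coherence from the previous step transports. Taking cofibrant right modules $\Dscr_{dg}^{\Zar}(\alpha)(\Spec S)=\Mod_{\Ch_{dg}^{\sperf}(\Spec S,\alpha)}^{c}$ then produces a Zariski stack of locally presentable dg categories, and the abelian equivalence is transported all the way through to an equivalence $\Dscr_{dg}^{\Zar}(\alpha)\we\Dscr_{dg}^{\Zar}(\beta)$. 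I would identify $\Dscr_{dg}^{\Zar}(\alpha)$ with the Zariski restriction of Toën's \'etale stack $L_\alpha$ of complexes of $\alpha$-twisted sheaves with quasi-coherent cohomology: the compact objects of $L_\alpha(\Spec S)$ are exactly the perfect $\alpha$-twisted complexes, every such is quasi-isomorphic to a strictly perfect one, and then the derived category of the small dg category of strictly perfects recovers $L_\alpha(\Spec S)$.

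Finally, I would promote from Zariski to \'etale. For $g:\Spec T\to X$ \'etale, define
\begin{equation*}
    \Dscr_{dg}^{\et}(\alpha)(\Spec T)=\mathrm{holim}_{\Spec S\to\im(g)}\Dscr_{dg}^{\Zar}(\alpha)(\Spec S)\hotimes_{S}T,
\end{equation*}
the homotopy limit over Zariski opens of $X$ factoring the image. Since the equivalence $\Dscr_{dg}^{\Zar}(\alpha)\we\Dscr_{dg}^{\Zar}(\beta)$ is compatible with base change, it induces an equivalence of the diagrams and hence of their homotopy limits, yielding $\Dscr_{dg}^{\et}(\alpha)\we\Dscr_{dg}^{\et}(\beta)$. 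By the sheaf property of $L_\alpha$ and the adjunction between Zariski and \'etale stacks one has $\Dscr_{dg}^{\et}(\alpha)\we L_\alpha$ and $\Dscr_{dg}^{\et}(\beta)\we L_\beta$, so $L_\alpha\we L_\beta$. Toën's Corollary 3.12 then forces $\alpha=\beta$.

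The main obstacle I expect is step two: maintaining enough strictness to treat $\StCh_{dg}^{\sperf}(-)$ as a genuine prestack of dg categories (rather than an $\infty$-categorical gadget requiring coherence up to homotopy), and verifying that the identification with the Zariski restriction of Toën's $L_\alpha$ is compatible with the equivalence coming from the abelian data. Once the bookkeeping is done correctly, the \'etale sheafification and the appeal to Toën's classification are essentially formal.
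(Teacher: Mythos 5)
Your proposal follows the paper's proof essentially step for step: rigidifying the Zariski stack via the pseudo-functor dictionary, passing to the dg category of strictly perfect complexes, taking cofibrant right modules to land in locally presentable dg categories, identifying the result with the Zariski restriction of To\"en's $L_\alpha$, promoting to an \'etale stack by a homotopy limit, and then invoking To\"en's Corollary 3.12. The approach, the intermediate constructions, and the final appeal to To\"en's classification are all the same, so the proposal is correct and not a genuinely different route.
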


\begin{corollary}
    Suppose that $X$ and $Y$ are quasi-compact and quasi-separated schemes over a
    commutative ring $R$ with $\alpha\in\Br(X)$ and $\beta\in\Br(Y)$. If
    $\QCoh(X,\alpha)\we\QCoh(Y,\beta)$ as $R$-linear abelian categories, then there exists an
    isomorphism $f:X\rightarrow Y$ of $R$-schemes such that $f^*(\beta)=\alpha$.
\end{corollary}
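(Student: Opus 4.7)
The plan is to assemble the corollary directly from the two main theorems in the paper: Theorem~\ref{thm:main} (the Gabriel--Rosenberg style reconstruction from the abelian category of twisted quasi-coherent sheaves) and the preceding theorem (the one just proved, which recovers the Brauer class from the Zariski stack of abelian categories of twisted sheaves). The corollary is essentially the concatenation of these two results, so no new geometric input is needed.

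More concretely, I would proceed as follows. Starting from the hypothesized equivalence of $R$-linear abelian categories $F:\QCoh(X,\alpha)\rwe\QCoh(Y,\beta)$, I invoke Theorem~\ref{thm:main} to produce a unique isomorphism of $R$-schemes $f:X\riso Y$, compatible with $F$ via supports, together with an equivalence
\begin{equation*}
    \StQCoh(\alpha)\we\StQCoh(f^*\beta)
\end{equation*}
of Zariski stacks of $\Oscr_X$-linear abelian categories on $X$. At this point the reconstruction of the scheme is done, and the only remaining issue is to identify the two Brauer classes $\alpha$ and $f^*\beta$ in $\Br(X)\subseteq\Hoh^2_{\et}(X,\Gm)$.

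To upgrade the equivalence of Zariski stacks to the equality of Brauer classes, I apply the preceding theorem with $\beta$ replaced by $f^*\beta$. Since $f^*\beta\in\Br(Y)$ pulled back along an isomorphism lies in $\Br(X)$, and hence in $\Hoh^2_{\et}(X,\Gm)$, the hypotheses are satisfied, and the theorem yields $\alpha=f^*\beta$ in $\Hoh^2_{\et}(X,\Gm)$. Combined with the isomorphism $f$ from the first step, this produces exactly the data asserted in the corollary.

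I do not expect any real obstacle here, since both ingredients have already been established: the only thing to check carefully is that the equivalence of stacks supplied by Theorem~\ref{thm:main} is indeed an equivalence of Zariski stacks of $\Oscr_X$-linear abelian categories on a single base $X$, which is precisely the form needed as input for the previous theorem. The main conceptual content of the argument lies entirely in those two theorems; the corollary itself is just their composition.
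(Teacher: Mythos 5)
Your proposal is correct and follows exactly the route the paper intends: Theorem~\ref{thm:main} supplies the isomorphism $f$ and the equivalence of Zariski stacks $\StQCoh(\alpha)\we\StQCoh(f^*\beta)$ on $X$, and the immediately preceding theorem then identifies $\alpha=f^*\beta$ in $\Hoh^2_{\et}(X,\Gm)$. The paper leaves this concatenation implicit, but your proof is the intended one.
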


\begin{corollary}
    Suppose that $X$ and $Y$ are noetherian schemes over a
    commutative ring $R$ with $\alpha\in\Br'(X)$ and $\beta\in\Br'(Y)$. If
    $\Coh(X,\alpha)\we\Coh(Y,\beta)$ as $R$-linear abelian categories, then there exists an
    isomorphism $f:X\rightarrow Y$ of $R$-schemes such that $f^*(\beta)=\alpha$.
\end{corollary}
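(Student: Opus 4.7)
The plan is to assemble the final corollary directly from two results already established earlier in the paper, so that essentially no new work is required—only bookkeeping of how the two pieces fit.

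First, I apply Corollary~\ref{cor:maincoherent} (the noetherian reconstruction corollary) to the given equivalence $F:\Coh(X,\alpha)\rwe\Coh(Y,\beta)$ of $R$-linear abelian categories. This produces the desired isomorphism $f:X\rightarrow Y$ of $R$-schemes, together with an equivalence
\begin{equation*}
    \StQCoh(\alpha)\we\StQCoh(f^*(\beta))
\end{equation*}
of Zariski stacks of $\Oscr_X$-linear abelian categories on $X$. Note that at this stage one genuinely uses the corollary rather than the theorem, because although Theorem~\ref{thm:maincoherent} already yields $f$ and an equivalence of stacks of coherent twisted sheaves, what is needed for the next step is an equivalence of Zariski stacks of \emph{quasi-coherent} twisted sheaves; this is precisely what the ind-completion argument in Corollary~\ref{cor:maincoherent} produces.

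Second, I apply the preceding theorem on Zariski stacks (the one proved via To\"en's classification of the derived Brauer group) to the two cohomological Brauer classes $\alpha$ and $f^*(\beta)$ in $\Hoh^2_{\et}(X,\Gm)$, both living on the single scheme $X$. The hypothesis of that theorem—an equivalence $\StQCoh(\alpha)\we\StQCoh(f^*(\beta))$ of Zariski stacks of $\Oscr_X$-linear abelian categories—is exactly what the first step has just supplied. The conclusion is that $\alpha = f^*(\beta)$, which is the required equality.

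The only potentially subtle point—and so the place I would be most careful in writing the proof—is to verify that the equivalence produced by Corollary~\ref{cor:maincoherent} really is $\Oscr_X$-linear in the sense demanded by the hypothesis of the Zariski-stack theorem, and that it is a genuine equivalence of Zariski (not merely \'etale) stacks over $X$. Both are already recorded in the statement of Corollary~\ref{cor:maincoherent}, so the proof reduces to citing these two results in sequence and noting that their outputs match up on the nose.
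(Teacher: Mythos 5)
Your proposal is correct and follows exactly the route the paper intends: the corollary is stated without proof precisely because it is the composition of Corollary~\ref{cor:maincoherent} (which supplies $f$ and the equivalence $\StQCoh(\alpha)\we\StQCoh(f^*(\beta))$ of Zariski stacks of $\Oscr_X$-linear abelian categories, via the ind-completion of the coherent equivalence) with the preceding theorem on Zariski stacks (which yields $\alpha=f^*(\beta)$). Your remark that one must go through the corollary rather than Theorem~\ref{thm:maincoherent} alone, in order to upgrade from coherent to quasi-coherent stacks, is the right observation, and the $\Oscr_X$-linearity and Zariski-site compatibility are indeed packaged into the statement of Corollary~\ref{cor:maincoherent}, so no further work is needed.
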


\begin{remark}
    We can take $R=\ZZ$ in the above statements, in which case the condition is simply that
    the abelian categories involved be equivalent as abelian categories.
\end{remark}

\section{Concluding remarks}

\begin{enumerate}
\item
We do not know whether to expect the main theorem to hold for $\alpha\in\Br'(X)$ and
$\beta\in\Br'(Y)$. It might even be possible for it to hold for
$\alpha\in\Hoh^2_{\et}(X,\Gm)$ and
$\beta\in\Hoh^2_{\et}(Y,\Gm)$. That is, for arbitrary $\Gm$-gerbes. This would amount to a reconstruction
theorem for $\Gm$-gerbes.

\item
The passage through the work of To\"en while very satisfying seems to come out of the blue.
It would be nice to have a theory of Morita theory of stacks of abelian categories that is
internal in some sense to the theory of abelian categories\footnote{In forthcoming work, J.
Calabrese and M. Groechenig show that our main theorem \emph{does} hold for arbitrary
$\Gm$-gerbes, and they do so without recourse to derived algebraic geometry.}.

\item
At the moment, it seems like a much more difficult question to determine when
$\Drm_{\qc}(X,\alpha)\we\Drm_{\qc}(X,\beta)$ as $R$-linear triangulated categories. Examples
are given in~\cite{caldararu-elliptic} where $\alpha$ and $\beta$ are not related by any
automorphism of $X$. However, as in the proof above, the natural context for Morita
theory of Azumaya algebras is that of Morita equivalence of stacks. To\"en's theorem says
exactly that two Azumaya algebras have the same Brauer class if and only if the stacks of
dg categories of modules over $X$ are derived Morita equivalent as stacks. Asking when $\Drm_{\qc}(X,\alpha)\we\Drm_{\qc}(X,\beta)$
is like asking when two sheaves of $\Oscr_X$ modules have isomorphic
$\Gamma(X,\Oscr_X)$-modules of global sections, a mostly unnatural question.
\end{enumerate}

\begin{bibdiv}
\begin{biblist}

\bib{ag}{article}{
    author = {Antieau, Benjamin},
    author = {Gepner, David},
    title = {Brauer groups and \'etale cohomology in derived algebraic geometry},
    journal = {to appear in Geometry \& Topology},
    eprint = {http://arxiv.org/abs/1210.0290},
}

\bib{caldararu-elliptic}{article}{
    author={C{\u{a}}ld{\u{a}}raru, Andrei},
    title={Derived categories of twisted sheaves on elliptic threefolds},
    journal={J. Reine Angew. Math.},
    volume={544},
    date={2002},
    pages={161--179},
    issn={0075-4102},
}

\bib{canonaco-stellari}{article}{
    author={Canonaco, Alberto},
    author={Stellari, Paolo},
    title={Twisted Fourier-Mukai functors},
    journal={Adv. Math.},
    volume={212},
    date={2007},
    number={2},
    pages={484--503},
    issn={0001-8708},
}

\bib{gabber}{article}{
    author={Gabber, Ofer},
    title={Some theorems on Azumaya algebras},
    conference={
    title={The Brauer group},
    address={Sem., Les Plans-sur-Bex},
    date={1980},
    },
    book={
        series={Lecture Notes in Math.},
        volume={844},
        publisher={Springer},
        place={Berlin},
    },
    date={1981},
    pages={129--209},
}

\bib{sga1}{book}{
    author={Grothendieck, Alexander},
    title={Rev\^etements \'etales et groupe fondamental},
    series={S\'eminaire de G\'eom\'etrie Alg\'ebrique},
    volume={1960/61},
    publisher={Institut des Hautes \'Etudes Scientifiques},
    place={Paris},
    date={1963},
}
%


\bib{gabriel}{article}{
    author={Gabriel, Pierre},
    title={Des cat\'egories ab\'eliennes},
    journal={Bull. Soc. Math. France},
    volume={90},
    date={1962},
    pages={323--448},
    issn={0037-9484},
}

\bib{grothendieck-tohoku}{article}{
    author={Grothendieck, Alexander},
    title={Sur quelques points d'alg\`ebre homologique},
    journal={T\^ohoku Math. J. (2)},
    volume={9},
    date={1957},
    pages={119--221},
    issn={0040-8735},
}

\bib{lieblich-moduli}{article}{
    author={Lieblich, Max},
    title={Moduli of twisted sheaves},
    journal={Duke Math. J.},
    volume={138},
    date={2007},
    number={1},
    pages={23--118},
    issn={0012-7094},
}






\bib{lurie-tannaka}{article}{
    author={Lurie, Jacob},
    title={Tannaka duality for algebraic stacks},
    date={2004},
    journal = {ArXiv e-prints},
    eprint={http://arxiv.org/abs/math/0412266},
}

\bib{perego}{article}{
    author={Perego, Arvid},
    title={A Gabriel theorem for coherent twisted sheaves},
    journal={Math. Z.},
    volume={262},
    date={2009},
    number={3},
    pages={571--583},
    issn={0025-5874},
}

\bib{rosenberg}{article}{
    author={Rosenberg, Alexander L.},
    title={The spectrum of abelian categories and reconstruction of schemes},
    conference={
        title={Rings, Hopf algebras, and Brauer groups},
        address={Antwerp/Brussels},
        date={1996},
    },
    book={
        series={Lecture Notes in Pure and Appl.  Math.},
        volume={197},
        publisher={Dekker},
        place={New York},
    },
    date={1998},
    pages={257--274},
}

\bib{toen-derived}{article}{
    author = {To{\"e}n, Bertrand},
    title = {Derived Azumaya algebras and generators for twisted derived categories},
    journal = {Invent. Math.},
    year = {2012},
    volume = {189},
    number = {3},
    pages = {581--652},
}

\end{biblist}
\end{bibdiv}

\end{document}